\let\@fnsymbol\@arabic
\newtheorem{remark}{Remark}
\newtheorem{thm}{Theorem}[section]
\newtheorem{prop}[thm]{Proposition}
\newtheorem{cor}[thm]{Corollary}
\newtheorem{lem}[thm]{Lemma}
\newtheorem{defn}{Definition}[section]
\def\cC{{\mathcal C}}
\newcommand{\R}{\mathbb R}
\newcommand{\D}[1]{\mbox{\rm #1}}
\newcommand{\dd}{\D{d}}
\numberwithin{equation}{section}
\begin{document}

\title{\bf Singular Perturbations of Nonlocal HJB Equations in Multiscale Stochastic Control}

\author{\normalsize{\bf Qi Zhang$^{a,}\footnote{qzhang@bimsa.cn}$, 
Yanjie Zhang$^{b,}\footnote{zhangyj2022@zzu.edu.cn}$, 
and Ao Zhang$^{c,}\footnote{aozhang1993@csu.edu.cn}$} \\[10pt]
\footnotesize{${}^a$ Beijing Institute of Mathematical Sciences and Applications Yau Mathematical Sciences Center,
Tsinghua University, Beijing 100084, China.} \\[5pt]
\footnotesize{${}^b$ School of Mathematics and Statistics, Zhengzhou University 450001, China.} \\[5pt]
\footnotesize{${}^c$ School of Mathematics and Statistics, HNP-LAMA, Central South University, Changsha 410083,
China.} \\[5pt]
}

\date{}
\maketitle
\vspace{-0.3in}

\begin{abstract}
This paper investigates a class of multiscale stochastic control problems driven by $\alpha$-stable L\'evy noises, where the controlled dynamics evolve across separate slow and fast time scales. The associated value functions are governed by a family of nonlocal Hamilton-Jacobi-Bellman (HJB) equations subject to singular perturbations. By employing the perturbed test function method, we carefully analyze this singular perturbation problem and derive a limiting effective equation as the time-scale separation parameter $\varepsilon$ approaches zero. This limiting equation characterizes the value function of the averaged control problem, thereby establishing a rigorous averaging principle for the original multiscale system. The effective Hamiltonian-along with the corresponding averaged control problem¡ªis obtained by averaging with respect to the invariant measure of the fast process. Moreover, we provide a probabilistic proof of convergence and establish an explicit convergence rate for the value functions.
\bigskip\\
  \textbf{AMS 2010 Mathematics Subject Classification:93E20, 93C70, 60H10, 60G57}  \\
  \textbf{Keywords and Phrases:} stochastic optimal control, singular perturbations, nonlocal Poisson equation, nonlocal HJB equation.
\end{abstract}


\section{Introduction.}

This paper investigates the averaging principle for controlled stochastic systems with slow-fast dynamics driven by non-Gaussian L\'evy noises. Specifically, we consider a class of jump-diffusion processes governed by the following slow-fast stochastic differential equations (SDEs) with $\alpha$-stable noises:
\begin{equation}
\label{slo0}
\left\{
\begin{aligned}
d{X^{\varepsilon}_s}&= b({X^{\varepsilon}_s},{Y^{\varepsilon}_s},v_{s})ds + d\textbf{L}_s^{{\alpha}_1}, \quad X^{\varepsilon}_{t} = x,\\
  d{Y^{\varepsilon}_s}& =\frac{1}{\varepsilon}c({X^{\varepsilon}_s}, Y^{\varepsilon}_s)ds + \varepsilon^{-\frac{1}{\alpha_2}}d\textbf{L}_s^{{\alpha_2}}, \quad Y^{\varepsilon}_{t}=y, \\
\end{aligned}
\right.
\end{equation}
where $0\leq t \leq s$, $X^\varepsilon_s \in \mathbb{R}^n$ and $Y^\varepsilon_s \in \mathbb{R}^m$ represent the slow and fast components, respectively, and $\{\textbf{L}^{{\alpha _i}}_{s}\}_{i=1,2}$ are independent symmetric $\alpha_i$-stable L\'evy processes with $1<\alpha_i <2$. The control variable $v_s$ takes values in a compact set $U$. Under suitable assumptions on the drift coefficients $b$ and $c$ (specified in Section \ref{sec-2}), the system is well-posed and the fast component $Y^\epsilon$ exhibits ergodic behavior. As the scaling parameter $\varepsilon \to 0$, the fast dynamics induce a singular perturbation on the slow component.

The associated stochastic control problem aims to maximize the discounted payoff functional:
\begin{equation}
\begin{aligned}
\label{mulSC}
J^{\varepsilon}(v,x,y,t)
:=\mathbb{E}\left[-\int^{T}_{t}e^{-\lambda(s-t)}L\left(X^{\varepsilon}_s,Y^{\varepsilon}_s, v_{s}\right)ds+e^{\lambda(t-T)}g\left(X^{\varepsilon}_{T},Y^{\varepsilon}_{T} \right) \middle| X^{\varepsilon}_t= x, Y^{\varepsilon}_{t}= y \right].
\end{aligned}
\end{equation}
The corresponding value function, defined as $u^{\varepsilon}(x,y,t) := \sup_{v \in \mathcal{U}} J^{\varepsilon}(v,x,y,t)$, satisfies a fully nonlinear nonlocal parabolic Hamilton-Jacobi-Bellman (HJB) equation on $(0,T) \times \mathbb{R}^n \times \mathbb{R}^m$. Here, $\lambda > 0$ denotes the discount rate, $L: \mathbb{R}^n \times \mathbb{R}^m \times U \to \mathbb{R}$ is the running cost, and $g: \mathbb{R}^n \to \mathbb{R}$ is the terminal cost.

Stochastic control problems arise in numerous applications, including financial mathematics \cite{AB88, HP09}, molecular dynamics \cite{CS12}, statistics \cite{PD12}, and materials science \cite{EA11}.
In the real world, many stochastic systems evolve on multiple time scales, often characterized by the coexistence of slow and fast dynamics.
The averaging principle provides a powerful tool to analyze such stochastic systems with singular perturbations by approximating the behavior of the slow component through an averaged dynamics.

For uncontrolled slow-fast stochastic systems, the averaging principle has been extensively studied.
In the Gaussian setting, the pioneering work of Khaminskii established this principle using discrete-time techniques. This foundation was further advanced by Pardoux and Veretennikov \cite{EP01}, who developed an averaging principle for diffusions via Poisson equation techniques. Subsequently, the averaging principle has been extensively explored in the context of stochastic partial differential equations by numerous researchers \cite{SC09, SC17}.

In the context of stochastic control, non-Gaussian jump noise and nonlocal HJB equations are widely used to model systems with discontinuous sample paths, such as intracellular transport \cite{LVS2015} and financial price dynamics \cite{M1963}.
For systems driven by L\'{e}vy noise, Bao, Yin, and Yuan \cite{JG18} derived averaging results for stochastic partial differential equations with $\alpha$-stable noise, while Sun, Xie, and Xie \cite{XS22} obtained both strong and weak convergence rates under various conditions.
More recently, the authors of \cite{YZ24} proved the weak convergence of the slow component to a L\'evy process as the scale-separation parameter tends to zero.

Given these developments for uncontrolled systems, it is both natural and important to investigate the asymptotic behavior of multiscale stochastic control problems.
For controlled stochastic systems, Borkar and Gaitsgory \cite{BG071, BG072} initiated the study of such problems using limit occupational measures and tightness arguments, and their approach has since been extended to infinite-dimensional control systems \cite{GC21, AS21}.
These works form a foundation for exploring the averaging principle for multiscale stochastic dynamics with singular perturbations.

By the dynamic programming principle (see, e.g., \cite{LP09, HP98}), a multiscale stochastic control problem can be equivalently characterized through its associated Hamilton-Jacobi-Bellman (HJB) equation. More precise, the value function $u^{\varepsilon}$ satisfies the following nonlocal HJB equation in the viscosity sense:
\begin{equation}
\label{HJBu}
  \left\{
   \begin{aligned}
   & \partial_t u^{\varepsilon}= \left(-\Delta_{x}\right)^{\frac{\alpha_1}{2}}u^{\varepsilon}-\frac{1}{\varepsilon}\mathcal{L}_{2}u^{\varepsilon} -H(x,y,\nabla_{x}u^{\varepsilon}) + \lambda u^{\varepsilon}, \\
   & u^{\varepsilon}(T,x,y) =g(x,y),
   \end{aligned}
   \right.
\end{equation}
where the Hamiltonian defined by
\begin{equation}
    H(x,y,p_{x}) :=\sup_{v\in {U}}\left[b(x,y,v)\cdot p_{x} -L(x,y,v)\right],
\end{equation}
and
\begin{align*}
    -\left(-\Delta\right)^{\frac{\alpha_{1}}{2}}u(\cdot) = & \int_{\mathbb{R}^{n}} \left[u(\cdot+z)-u(\cdot)-\mathbf{1}_{B_{1}(0)}(z)\langle \nabla u(\cdot), z\rangle \right]\frac{C_{n,\alpha_{1}}}{|z|^{n+\alpha_{1}}}dz\quad \nonumber
\end{align*}
is the infinitesimal generator of $(X_s^\varepsilon, Y_s^\varepsilon)$. We recall that the fractional Laplacian is defined as
\begin{align*}
    -\left(-\Delta\right)^{\frac{\alpha_{1}}{2}}u(\cdot) = & \int_{\mathbb{R}^{n}} \left[u(\cdot+z)-u(\cdot)-\mathbf{1}_{B_{1}(0)}(z)\langle \nabla u(\cdot), z\rangle \right]\frac{C_{n,\alpha_{1}}}{|z|^{n+\alpha_{1}}}dz\quad \nonumber
\end{align*}
with the normalization constant $C_{n,\alpha_{1}}=\frac{\alpha\Gamma(n+\alpha_{1})/2}{2^{1-\alpha_{1}}\pi^{n/2}\Gamma(1-\alpha_{1}/2)}$.

From an analytical viewpoint, the small-scale parameter in a multiscale stochastic control problem \eqref{mulSC} naturally induces a singular perturbation structure in the associated nonlocal HJB equations \eqref{HJBu}.
Consequently, the averaging principle for multiscale stochastic control can be equivalently formulated as a singular perturbation problem for the corresponding family of HJB equations.
The study of such singular perturbations has a rich history within the viscosity solution framework for HJB equations.
Seminal works by Alvarez and Bardi \cite{AB03}, followed by subsequent developments by Barles, Hama, and their collaborators \cite{BH23, BH24, BCM10}, established convergence results for singularly perturbed local and nonlocal HJB equations by employing the perturbed test function method, originally introduced by Evans \cite{E89} in the context of periodic homogenization.
Building on this line of research, \cite{BM16} extended the analysis to settings involving non-Gaussian fast processes, thereby covering a broader class of stochastic dynamics.
More recently, Gassiat and Manca \cite{GM23} quantified the rate of convergence of the perturbed value functions toward their effective limits in unbounded domains.

Given these developments, it is both natural and important to investigate the asymptotic behavior of multiscale stochastic control problems and their associated singularly perturbed HJB equations.
However, averaging principles for controlled slow-fast SDEs driven by $\alpha$-stable L'evy noise and the corresponding singular perturbations of nonlocal HJB equations remain relatively unexplored.
Extending the averaging principle to multiscale jump diffusions presents two major challenges.
From a probabilistic viewpoint, sample paths under non-Gaussian L\'evy noise are c\'adl\'ag (right-continuous with left limits), which complicates the application of classical time-discretization and martingale methods typically used in Gaussian settings.
Analytically, the governing HJB equations involve nonlocal operators, and the definition of their viscosity solutions is itself nonlocal in nature.
These two features trajectory discontinuity and operator nonlocality create significant technical difficulties that require new analytical tools and probabilistic arguments.

In this work, we address these challenges by developing an averaging principle for the multiscale control problem \eqref{mulSC} and the associated singular perturbation problem for the nonlocal HJB equation \eqref{HJBu}. We first analysis this problem from the PDE standpoint.
By employing the perturbed test function method with the Liouville property, we rigorously prove the convergence of $u^{\varepsilon}$ to an effective value function, thereby extending the framework of \cite{BH23} to the nonlocal setting.
This result characterizes the asymptotic behavior of the value function $u^{\varepsilon}$ as the scale-separation parameter $\varepsilon \to 0$.
Then from a probabilistic standpoint, we further establish a rigorous asymptotic analysis of the multiscale stochastic control problem \eqref{mulSC} and identify the effective averaging equation of \eqref{slo0} as $\varepsilon \to 0$.
This limiting equation characterizes the averaging principle for the original multiscale stochastic control system.
Moreover, the effective stochastic control problem associated with the effective HJB equation are constructed by averaging with respect to the invariant measure of the fast component $Y^{\varepsilon}_{s}$.
Our proofs rely on nonlocal Poisson equation techniques to establish the convergence of the value functions.
Beyond proving convergence, we also derive an explicit convergence rate, which significantly strengthens the quantitative understanding of the averaging behavior in multiscale stochastic control problems.

This work is organized as follows. We begin in Section 2 by stating the main theorems and introducing the underlying assumptions, which encompass regularity and dissipativity conditions on the drift coefficients $b$ and $c$, and hypotheses on the terminal function $g$ and running cost $L$. The convergence of viscosity solutions for the singularly perturbed HJB equations is then analyzed in Section 3 using the perturbed test function method. Section 4 shifts to a probabilistic perspective, where the strong averaging principle for system \eqref{slo0} is established, and the convergence of the value functions is proven via techniques from the Poisson equation. We conclude with general remarks in Section 5. Supplementary material is included in the appendices: a proof of Lemma \ref{LiouP} is provided in Appendix A, while a heuristic derivation based on the dynamic programming principle is presented in Appendix B.

We conclude this section with a summary of frequently used notations. The letter $C$ denotes a generic positive constant whose value may change from line to line. We write $C(p)$ indicate dependence on a parameter $p$. Define the parameter set
$$\mathcal{P} = \left\{ \|c\|_2, \beta, \|\nabla_x b(x,y,v)\|_0, \|\nabla_y\nabla_x b(x,y,v)\|_0, \|\nabla^2_y b(x,y,v)\|_0, \|\nabla_y \nabla_v b\|_{0}, T \right\}.$$
and denote the constant by $ C(\mathcal{P})$ for clarity.
We use $\otimes$, $\langle \cdot, \cdot \rangle $, and $|\cdot|$ to denote the tensor product, inner product, and norm in Euclidean space, respectively. The gradient operator in Euclidean space is denoted by $\nabla$. For positive integers $k,l$ and a probability measure $\mu$, we define the function spaces:
\begin{equation*}
\begin{aligned}
&\mathcal{B}_{b}(\mathbb{R}^{n}):=\left\{f: \mathbb{R}^{n}\rightarrow \mathbb{R}\mid f\text{ is bounded Borel measurable}\right\},\\
&\mathcal{C}_{0}(\mathbb{R}^{n}):=\left\{f: \mathbb{R}^{n}\rightarrow \mathbb{R}\mid f\text{ is continuous and has compact support}\right\},\\
&\mathcal{C}^{\mu}_{0}(\mathbb{R}^{n}):=\left\{f \in\cC_{0}(\R^n)\mid f\text{ is centered with respect to the measure $\mu$, i.e., $\textstyle{\int_{\mathbb{R}^{n}} f(x) \mu(dx)} = 0$}\right\},\\
&\cC^{k}(\mathbb{R}^{n}):=\left\{f: \mathbb{R}^{n}\rightarrow \mathbb{R}\mid f\text{ and all its partial  derivatives up to order $k$ are continuous} \right\},\\
&\cC^{k}_{b}(\mathbb{R}^{n}):=\left\{f \in \cC^{k}(\mathbb{R}^{n})\mid\text{ for  $ 1\leq i \leq k$, the $i$-th order partial  derivatives of $f$ are bounded} \right\},\\
&\cC^{k,l}_{b}(\mathbb{R}^{n}\times \mathbb{R}^{m} ):=\left\{f(x,y)\mid \text{ for $1\leq |\beta_1| \leq k$ and $1\leq |\beta_2|\leq l$, $\nabla^{\beta_1}_{x}\nabla^{\beta_2}_{y}f $ is uniformly bounded} \right\}.
\end{aligned}
\end{equation*}
We endow the space $\cC^{k}(\mathbb{R}^{n})$ with the norm $\|h\|_{k}=\|h\|_{0}+\sum_{j=1}^k\|\nabla^{\otimes j}h\|$, making it a Banach space. Finally, we define the conditional expectation:
\begin{equation*}
    \mathbb{E}_{(x,y)}[\cdot] := \mathbb{E}\left[\; \cdot \; \big| \; X^{\varepsilon, t,x}_{t}=x,\;Y^{\varepsilon, t,y}_{t}=y\right].
\end{equation*}

\section{Assumptions and the main results}\label{sec-2}

To ensure the well-posedness of the slow-fast stochastic differential equation (SDE) (\ref{slo0}) and the ergodicity of the fast process $Y^{\varepsilon}_{t}$, we impose the following regularity and dissipativity conditions on the drift coefficients $b$ and $c$.

\noindent ($\bf{A_{b}}$): Assume that the function $b$ is Lipschitz continuous in $(x, y)$ uniformly with respect to
$v\in {U}$, and has linear growth in $x$. That is, there exist positive constants  $K_1, K_2, K_3$ such that for all $x,x_1, x_2 \in \mathbb{R}^{n}$, $y,y_1, y_2 \in \mathbb{R}^{m}$, and $v\in {U}$,
\begin{equation*}
| b(x_1,y_1, v)-b(x_2,y_2,v)| \leq K_1 ( | x_1-x_2| +| y_1-y_2| ),
\end{equation*}
and
\begin{equation*}
\left|b(x,y,v)\right|\leq K_2 (1+|x|), ~~\sup_{v\in U}\left|\nabla_yb(x,y,v)\right|\leq K_3.
\end{equation*}

\noindent ($\bf{A_{c}}$): Suppose that $ c\in \mathcal{C}^{2,3}_{b}(\mathbb{R}^{n}\times \mathbb{R}^{m} )$, and there exists a positive constant $\beta$ such that for all $y_1, y_2 \in \mathbb{R}^{n}$,
\begin{equation}
\label{0disb}
  \sup_{x\in \mathbb{R}^{n}} |c(x,0)|< \infty, \quad \sup_{x\in \mathbb{R}^{n}} \langle c(x,y_1)-c(x,y_2), y_1-y_2 \rangle \le -\beta |y_1-y_2|^2.
\end{equation}

\begin{remark}
Under assumptions ($\bf{A_{b}}$) and ($\bf{A_{c}}$), the stochastic differential equation (\ref{slo0}) has a unique strong solution $({X^{\varepsilon}},{Y^{\varepsilon}})$, see e.g. \cite{DA09}.
\end{remark}

To study the stochastic control problem associated with the slow-fast jump-diffusion system (\ref{slo0}), we also impose the following conditions on the utility function $g$ and running cost $L$.\\
\noindent ($\bf{A_{L}}$): The running cost $L$ is uniformly Lipschitz continuous in $x$, with uniform continuity in $v$.  Moreover, there exists a constant $K_4>0$ so that
\begin{equation}
    \sup_{y \in \mathbb{R}^m, v\in {U}}|L(x,y,v)| \leq K_4(1+|x|),\quad \sup_{x \in \mathbb{R}^n, v\in U}|\nabla_{y} L(x,y,v)| \leq K_4.
\end{equation}
\noindent ($\bf{A_{g}}$):
The utility function $g$ is Lipschitz continuous in $x$ and $y$. Moreover, there exists a constant $K_5>0$ so that
\begin{equation}
    \sup_{y\in \mathbb{R}^m}|g(x,y)| \leq K_5(1+|x|).
\end{equation}

\begin{remark}
Under assumptions $(\bf{A_{b}})$ and $(\bf{A_{L}})$, the Hamiltonian
\begin{equation}
    H(x,y,p_{x}) :=\sup_{v\in {U}}\left[b(x,y,v)\cdot p_{x} -L(x,y,v)\right]
\end{equation}
is Lipschitz continuous in $x$, $y$, and $p_{x}$, and convex in $p_{x}$.
\end{remark}

Now we introduce the following frozen equation:

\begin{equation}\label{0nsde}
dY^{x^{\prime},y}_{s} = c(x^{\prime},Y^{x^{\prime},y}_{s}) ds + d\textbf{L}^{\alpha_2}_{s}, \quad Y^{x^{\prime},y}_{0} = y \in \mathbb{R}^m.
\end{equation}
Under the dissipative assumption ($\mathbf{A_{c}}$), the jump process $Y^{x^{\prime},y}_{s}$ admits a unique ergodic measure $\mu^{x^{\prime}}$, as shown in, e.g., \cite[Lemma 2]{YZ24}.
Based on this ergodic measure $\mu^{x^{\prime}}$, we define the effective Hamiltonian
\begin{equation}
\label{OH}
\Bar{H}(x^{\prime},p_{x^{\prime}}) = \int_{\mathbb{R}^m} H(x^{\prime},y,p_{x^{\prime}}) , \mu^{x^{\prime}}(dy),
\end{equation}
and the effective HJB equation:
\begin{equation}
\label{0EHJBu}
\left\{
\begin{aligned}
& \partial_t u(t,x) = (-\Delta_{x})^{\frac{\alpha_1}{2}} u(t,x) - \Bar{H}(x,\nabla_{x} u(t,x)) + \lambda u(t,x), \\
& u(T,x) = \Bar{g}(x).
\end{aligned}
\right.
\end{equation}

In the next result, we show that the value function $u^{\varepsilon}$ converges to the viscosity solution of the effective HJB equation.

\begin{thm}\label{theorem1}
Under assumptions ($\mathbf{A_{b}}$), ($\mathbf{A_{c}}$), ($\mathbf{A_{g}}$), and ($\mathbf{A_{L}}$), the value function $u^{\varepsilon}$ of the multiscale stochastic control problem (\ref{mulSC}) converges uniformly to the viscosity solution $\Bar{u}$ of the effective HJB equation \eqref{0EHJBu}. That is, for every $x \in \mathbb{R}^n$, $y \in \mathbb{R}^m$, we have
\begin{equation*}
\lim_{\varepsilon \to 0} u^{\varepsilon}(t,x,y) = u(t, x).
\end{equation*}
\end{thm}

Now we consider the associated effective limiting stochastic control problem for the effective HJB equation \eqref{0EHJBu}, which can be interpreted as an averaging of the original multiscale stochastic control problem \eqref{mulSC}.

Since the integral and supremum operators cannot be interchanged, the effective Hamiltonian defined in \eqref{OH} is not of Bellman type. Hence, $\Bar{H}$ cannot be expressed in the form derived from Dynamic Programming, i.e.,
\begin{equation*}
\label{HH}
\Bar{H}(x^{\prime},p_{x^{\prime}}) := \int_{\mathbb{R}^m} H(x^{\prime},y,p_{x^{\prime}}) , \mu^{x^{\prime}}(dy) = \int_{\mathbb{R}^m} \sup_{v \in U} \left[ b(x^{\prime},y,v) \cdot p_{x^{\prime}} - L(x^{\prime},y,v) \right] \mu^{x^{\prime}}(dy).
\end{equation*}
To ensure that the effective Hamiltonian defined in \eqref{HH} is of Bellman type, the original control set $U$ must be extended to ${U}^{ex}:=\{\vartheta: \mathbb{R}^{m}\rightarrow U \text{measurable}\}$. Under this extended control set, the effective Hamiltonian \eqref{OH} is of Bellman type and satisfies
\begin{equation}
\label{HHH}
\Bar{H}(x^{\prime},p_{x^{\prime}}) = \sup_{\vartheta \in \mathcal{U}} \int_{\mathbb{R}^m} \left[ b(x^{\prime},y,\vartheta) \cdot p_{x^{\prime}} - L(x^{\prime},y,\vartheta) \right] \mu^{x^{\prime}}(dy).
\end{equation}
For a detailed illustration of \eqref{HHH}, we refer to \cite[Proposition 3.1]{BH24}.

To obtain the convergence rate of the original system¡¯s value functions, stronger assumptions on the coefficients $b, c, L, g$ are required.\\

\noindent ($\bf{A^{'}_{b}}$): Assume that the function $b\in \cC^{2,3}_{b}$.\\

\noindent ($\bf{A^{'}_{L}}$): Suppose that the function $L\in \cC^{2,3}_{b}$.\\

\noindent ($\bf{A^{'}_{g}}$):
The utility function $g \in \cC^{3}_{b}$ does not depend on the variable $y$; that is, $g(x,y)=g(x)$.\\

\noindent ($\bf{A_{b,L}}$): The function $b$ and the running cost $L$ are decoupled from the fast variables, i.e.,
$b(x,y,v)=b_1(x,v)+b_2(x,y), ~~L(x,y,v)=L_1(x,v)+L_2(x,y)$.\\

We can now define the effective stochastic control problem
\begin{equation}
\label{EffSC}
    u(t,x)=\sup_{\vartheta \in \mathcal{U}}\Bar{J}(t,x,v):=\mathbb{E}\left( - \int^{T}_{t} e^{-\lambda(s-t)}\Bar{L}(\Bar{X}_{s}, v_s) ds + e^{\lambda(t-T)}{g}(\Bar{X}_{T}) | \Bar{X}_{t} = x \right),
\end{equation}
where $\Bar{X}$ is the solution to the effective system \eqref{barX}.

The following theorem is the main result of this paper.
\begin{thm}\label{intm}
Let the assumptions $(\bf{A^{'}_{b}})$, $(\bf{A_{c}})$, $(\bf{A^{'}_{L}})$, $(\bf{A^{'}_{g}})$ and ($\bf{A_{b,L}}$) hold.
Let $v$ be any admissible control, let $(X^{\varepsilon},Y^{\varepsilon})$ be the solution
of \eqref{slo0} corresponding to $v$, and let $\Bar{X}^{\varepsilon}$ be the solution of \eqref{barX} corresponding to the same $v$. Then for every $1< p<\frac{2\alpha_1\alpha_2}{\alpha_1+2\alpha_2}$, we have that
\begin{equation}
\label{XX}
\lim_{\varepsilon \to 0}\sup_{v \in U}\mathbb{E}\left(\sup_{s\in[t,T]}|X^{\varepsilon}_s-\Bar{X}^{\varepsilon}_s|^{p}\right)=0,
\end{equation}
and
\begin{equation}
|u^{\varepsilon}-\bar{u}|\leq C(\mathcal{P}) \varepsilon^{p},
\end{equation}
where the averaged controlled jump diffusion $\Bar{X}_{s}$ is defined as the solution to
\begin{equation}
\label{barX}
    d\Bar{X}_{s} = \Bar{b}(\Bar{X}_{s}, v_{s}) ds + d\textbf{L}^{\alpha_1}_{s},\quad  \Bar{X}_{t} = x \in \mathbb{R}^n,
\end{equation}
and the effective drift $\Bar{b}(x^{\prime},v)$ is given by
\begin{equation}
\Bar{b}(x^{\prime},v):= \int_{\mathbb{R}^m} b(x^{\prime},y,v) d\mu^{x^{\prime}}(y).
\end{equation}
\end{thm}

\begin{remark}
Under Assumption ($\mathbf{A_{b,L}}$), one may not extend the control set $\mathcal{U}$ to $\mathcal{U}^{ex}$, defined as the set of progressively measurable processes taking values in the extended control set ${U}^{ex}:=\{\vartheta: \mathbb{R}^{m}\rightarrow U \text{measurable}\}$ to ensure that the effective Hamiltonian \eqref{OH} is of Bellman type. We refer to \cite{BH24} for further details.
\end{remark}

\section{Convergence of nonlocal HJB equations}
In this section, we construct an effective Hamiltonian and initial data to demonstrate the convergence of the solution pair of the singularly perturbed optimal nonlocal HJB equation to the solution pair of the limiting optimal nonlocal HJB equation. Under certain appropriate assumptions, by applying It\^o's formula and taking expectations on both sides, the above value function $u^{\varepsilon}$ mentioned above is the unique solution to a nonlocal  HJB equation (\ref{HJBu}).

\subsection{Viscosity solutions}
In this subsection, we recall some preliminaries for viscosity solutions for nonlocal HJB equations.
Consider the nonlocal HJB equation
\begin{equation}\label{nHJBu}
  \left\{
   \begin{aligned}
    & \partial_{t}u(t,x)  -\left(-\Delta_{x}\right)^{\alpha_{1}/2} u + \bar{H}(x,\nabla u(t,x))  - \lambda u(t,x) = 0, \quad (t,x) \in (0,T) \times \Omega, \\
    & u(T,x) =  g(x), \quad x \in \Omega,
   \end{aligned}
   \right.
\end{equation}
Here, the discount factor $\lambda \in \mathbb{R}$, $\Omega$ is the whole space $\mathbb{R}^n$, or in a bounded connected smooth open domain $\Omega \subset \mathbb{R}^n$. If $\Omega $ is a bounded domain in $\mathbb{R}^n$, we also assume the Dirichlet boundary condition
\begin{equation}
    u(x) =  h(x), \quad \text{on } [0,T]\times\Omega^{c}.
\end{equation}
where $h$ is a continuous function.

Now we recall two equivalent definitions of viscosity solutions for nonlocal HJB equations (\ref{nHJBu}), see e.g.  Barles and Imbert  \cite{GC08}, Ciomaga \cite{AC12}, and Mou \cite{CM19}.
\begin{defn}\label{VSD1}
A upper (lower) semicontinuous function $u: [0,T]\times \mathbb{R}^n \to \mathbb{R}$ is a viscosity subsolution (supersolution) to (\ref{nHJBu}) if for any bounded test-function $\varphi \in C^{1,2}([0,T)\times\mathbb{R}^n)$ such that $u - \varphi$ attains its global maximum (minimum) at $(t,x) \in (0,T)\times\Omega$, then
\begin{equation*}
    \partial_t \varphi(t,x)  -\left(-\Delta_{x}\right)^{\alpha_{1}/2}\varphi(t,x) + H(x,\nabla_{x}\varphi(t,x))  - \lambda u(t,x)  \geq (\leq)  0.
\end{equation*}
Moreover, $\varphi$ is a viscosity solution of (\ref{nHJBu}) if it is both a subsolution and a supersolution of (\ref{nHJBu}).
\end{defn}

Note that the above definition involves the maximum and minimum of $u-\varphi$ in the whole space, and it is not convenient to use in many situations. Now we give equivalent definitions for viscosity solutions, which only rely on the maximum and minimum in bounded domains. We first introduce the following localized operators
\begin{equation}\label{IOd}
    I^{\delta}[\varphi](x)= \int_{B_{\delta}(0)} \left[\varphi(x+z)- \varphi(x)-\langle \nabla \varphi(x), z\rangle \right]\frac{C_{n,\alpha}}{|z|^{n+\alpha}}dz,\quad x\in\mathbb{R}^n,
\end{equation}
and
\begin{equation}\label{IOdc}
    I^{\delta,c}[\varphi](x)= \int_{B^{c}_{\delta}(0)} \left[\varphi(x+z)- \varphi(x) \right]\frac{C_{n,\alpha}}{|z|^{n+\alpha}}dz,\quad x\in\mathbb{R}^n,
\end{equation}
where the test function $\varphi \in C^{2}(B_{\delta}(x))$ for some constant $\delta>0$.

\begin{remark}
We remark that for a general L\'evy measure $\nu$, the localized operators are given by
\begin{equation*}
    I^{\delta}[\varphi](x)= \int_{B_{\delta}(x)} \left[\varphi(x+z)-\varphi(x)-1_{B_{1}(0)}(z)\langle \nabla \varphi(x), z\rangle \right] \nu(dz),\quad x\in\mathbb{R}^n,
\end{equation*}
and
\begin{equation*}
    I^{\delta,c}[\varphi,p](x)= \int_{B^{c}_{\delta}(x)} \left[\varphi(x+z)-\varphi(x) - 1_{B_{1}(0)} p\cdot z \right]\nu(dz),\quad x\in\mathbb{R}^n.
\end{equation*}
In our case, the L\'evy measure $\nu(dz) = C_{n,\alpha}/|z|^{n+\alpha}dz$ is symmetric. Then for every $\delta >0$, we have
\begin{align*}
    I^{\delta}[\varphi](x)= & \int_{B_{\delta}(x)} \left[\varphi(x+z)-\varphi(x)-1_{B_{1}(0)}(z)\langle \nabla \varphi(x), z\rangle \right] \frac{C_{n,\alpha}}{|z|^{n+\alpha}}dz \\
    =&  \int_{B_{\delta}(x)} \left[\varphi(x+z)- \varphi(x)-\langle \nabla \varphi(x), z\rangle \right]\frac{C_{n,\alpha}}{|z|^{n+\alpha}}dz,
\end{align*}
and
\begin{align*}
     I^{\delta,c}[\varphi,p](x)= & \int_{B^{c}_{\delta}(x)} \left[\varphi(x+z)-\varphi(x) - 1_{B_{1}(0)}(z) p\cdot z \right]\frac{C_{n,\alpha}}{|z|^{n+\alpha}}dz \\
     = & \int_{B^{c}_{\delta}(x)} \left[\varphi(x+z)- \varphi(x) \right]\frac{C_{n,\alpha}}{|z|^{n+\alpha}}dz.
\end{align*}
Thus the localized operators can be simplified as in (\ref{IOd}) and (\ref{IOdc}).
\end{remark}

\begin{defn}\label{VSD2}
A upper (lower) semicontinuous function $\varphi: [0,T]\times \mathbb{R}^n \to \mathbb{R}$ is a viscosity sub(super)-solution if for any bounded test-function $\varphi \in C^{1,2}([0,T)\times B_{\delta}(x))$ such that $(t,x)$ is a maximum (minimum) point of $u - \varphi$ on $B_{\delta}(x,t)$, then
\begin{align}
     \partial_{t}\varphi (t,x) +  \nonumber I_{x}^{\delta}[\varphi](t,x)+ I_{x}^{\delta,c}[u](t,x)  + H\left(x,\nabla_{x} \varphi(t,x)\right)  - \lambda u \geq  ( \leq) 0.
\end{align}
We say $u$ is a viscosity solution of (\ref{nHJBu}) if it is both a subsolution and a supersolution of (\ref{nHJBu}).
\end{defn}

The existence and the uniqueness of the solution $u$ of \eqref{nHJBu} on $\mathbb{R}^n$ or smooth connect bounded open set $\Omega \subset \mathbb{R}^n$ are established in the framework of the viscosity solution, see e.g. Pham \cite{HP98}, Barles and Imbert \cite{GC08}, Ciomaga \cite{AC12}. Moreover, by Schauder estimates (see e.g. \cite{GB12}), $u^{\varepsilon}$ is Lipschitz continuous with respect to $t,x$. By time-reversal transformation $t \mapsto T-t$ and \cite[Theorem 18]{AC12}, the nonlocal HJB equation \eqref{nHJBu} in bounded domain $\Omega$ also has the following maximum principle.
\begin{cor}\label{Maxp}
(Maximum principle) Let $\Omega \subset \mathbb{R}^n$ be a open, simply connected, and bounded domain. Let $u$ be a viscosity subsolution to (\ref{nHJBu}) that attains a maximum at $(x_{0},t_{0}) \in [0,T] \times \Omega$. Then $u$ is a constant in $[t_{0},T]\times\Omega$.
\end{cor}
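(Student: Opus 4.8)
The plan is to prove this as a strong maximum principle, with the engine being the full support of the L\'evy measure $\nu(dz)=C_{n,\alpha}|z|^{-n-\alpha}\,dz$ of the fractional Laplacian: because this kernel is strictly positive on all of $\mathbb{R}^n$, the nonlocal diffusion ``sees'' the whole space and a maximum attained at one point must propagate everywhere. Write $M:=u(t_0,x_0)=\max_{[0,T]\times\Omega}u$ and introduce the coincidence set $\Sigma:=\{(t,x)\in[0,t_0]\times\Omega: u(t,x)=M\}$, which is nonempty and, since $u$ is upper semicontinuous, relatively closed in $[0,t_0]\times\Omega$. The goal is to show that $\Sigma$ is also relatively open, so that by connectedness of $[0,t_0]\times\Omega$ it must be the whole set.

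First I would establish spatial propagation at the fixed time $t_0$. Since $u\le M$ with equality at $x_0$, one may touch $u$ from above by a test function $\varphi\in C^{1,2}$ with $\partial_t\varphi(t_0,x_0)=0$, $\nabla_x\varphi(t_0,x_0)=0$ and $D^2_x\varphi(t_0,x_0)\le 0$. Then the local terms of Definition \ref{VSD2} satisfy $\partial_t\varphi=0$ and, by Taylor expansion, $I^{\delta}[\varphi](t_0,x_0)\le 0$, while the Hamiltonian reduces to $H(x_0,0)$. The nonlocal tail is automatically nonpositive,
\[
I^{\delta,c}[u](t_0,x_0)=\int_{B^{c}_{\delta}(x_0)}\bigl[u(t_0,x_0+z)-M\bigr]\,\frac{C_{n,\alpha}}{|z|^{n+\alpha}}\,dz\le 0,
\]
because $u\le M$. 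I expect to deduce from the subsolution inequality that this tail must in fact vanish; and since $\nu$ has full support, the vanishing of a nonpositive integrand against $\nu$ forces $u(t_0,x_0+z)=M$ for $\nu$-a.e.\ $z\in B^{c}_{\delta}(x_0)$. Letting $\delta\to 0$ and invoking upper semicontinuity then yields $u(t_0,\cdot)\equiv M$ on all of $\Omega$.

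Next I would propagate backward in time: once $u(t_0,\cdot)\equiv M$, repeating the touching argument at nearby earlier times exploits the terminal-value (backward parabolic) structure so that the coincidence travels toward smaller $t$, giving $u(t,\cdot)\equiv M$ on a time interval and hence relative openness of $\Sigma$; combined with the spatial step this forces $\Sigma=[0,t_0]\times\Omega$. The main obstacle lies in the spatial step, where the subsolution inequality only gives
\[
I^{\delta,c}[u](t_0,x_0)\ge -I^{\delta}[\varphi](t_0,x_0)-H(x_0,0)-\lambda M,
\]
so to force the tail to vanish I must control the lower-order contributions $H(x_0,0)$ and $\lambda M$ (using the normalization $\nabla u(x_0)=0$ and the degenerate-elliptic structure of the equation), and then make the ``full support forces constancy'' step rigorous in the viscosity framework by sending $\delta\to 0$ so that the localized operators $I^{\delta}$ and $I^{\delta,c}$ recombine into the genuine nonlocal operator. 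This is the nonlocal analogue of the classical Hopf--Nirenberg argument, and the delicate point is precisely the interplay between the cut-off scale $\delta$ and the admissible viscosity test functions, carried out as in \cite[Theorem 7]{C2012}.
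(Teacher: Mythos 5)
The first thing to note is that the paper itself does not prove this corollary: it is quoted directly from \cite[Theorem 7]{C2012}, so there is no internal argument to compare against line by line. Your overall strategy --- propagation of maxima driven by the full support of the kernel $C_{n,\alpha}|z|^{-n-\alpha}$, a closed-and-open argument on the coincidence set, separate spatial and temporal propagation steps --- is the standard route and is essentially the one carried out in the cited reference, which you also invoke at the end.

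However, the sketch has a genuine gap at its central step, one which you flag yourself but do not resolve, and which cannot be resolved by the tools you name. Taking the natural test function $\varphi\equiv M$ (constant, touching $u$ from above at $(t_0,x_0)$), Definition \ref{VSD2} yields
\begin{equation*}
0=\partial_t\varphi(t_0,x_0)\le I^{\delta}[\varphi](t_0,x_0)+I^{\delta,c}[u](t_0,x_0)+H(x_0,0)+\lambda M
= I^{\delta,c}[u](t_0,x_0)+H(x_0,0)+\lambda M,
\end{equation*}
and since $I^{\delta,c}[u](t_0,x_0)\le 0$, the tail can be forced to vanish only if $H(x_0,0)+\lambda M\le 0$. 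Neither the ``normalization $\nabla u(x_0)=0$'' nor degenerate ellipticity produces this sign: $H(x_0,0)$ and $\lambda M$ are zeroth-order quantities with no sign constraint in the setting of \eqref{nHJBu}. Worse, without such a condition the statement you are trying to prove is simply false. If $H(\cdot,0)\ge c>0$, take a smooth compactly supported bump $\psi$ and set $u(t,x)=\eta\psi(x)$: for $\eta$ small, $0=\partial_t u\le \Delta_x^{\alpha/2}u+H(x,\nabla_x u)+\lambda u$ holds classically (the nonlocal and gradient terms are $O(\eta)$ while $H\ge c$), so $u$ is a non-constant subsolution attaining an interior maximum. Similarly, if $\lambda M>0$, the spatially constant function $M-(t-t_0)^2$, suitably truncated, is a subsolution whose maximum at $t_0$ does not propagate backward in time. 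The corollary therefore requires a structural hypothesis ensuring that the constant $M$ is a supersolution of \eqref{nHJBu} (e.g.\ $H(x,0)+\lambda M\le 0$), which is exactly the type of assumption under which \cite[Theorem 7]{C2012} is proved. It is worth observing that in the only place the paper actually uses the corollary --- the Liouville property of Lemma \ref{LiouP}, applied to $-\mathcal{L}_{2}V=0$ in \eqref{NonlocalL2} --- the offending terms are absent ($H\equiv 0$, $\lambda=0$, no time derivative), and there your argument does close: the subsolution inequality collapses to $0\le I^{\delta,c}[V](y_0)\le 0$, the tail vanishes, full support of the kernel propagates the maximum, and upper semicontinuity upgrades ``a.e.'' to ``everywhere.'' A correct completion of your proposal is thus to prove the corollary under the additional hypothesis that $u\equiv M$ is a supersolution, which is the form in which it is actually needed.
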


In the whole space, we have the following comparison result for the HJB equation (\ref{nHJBu}), see e.g. \cite{GC08, AC12}.

\begin{cor}
Let the upper semicontinuous function $u:\mathbb{R}^n \times (0,T] \to \mathbb{R}$ and the lower semicontinuous function $v:\mathbb{R}^n \times (0,T] \to \mathbb{R}$ be respectively a sub and a super solution of the HJB equation (\ref{nHJBu}). Then $u \leq v$.
\end{cor}



In a non-periodic setting for the fast component, it is necessary to establish the Liouville property for viscosity solutions of nonlocal elliptic equations. Prior to presenting this Liouville property, we first introduce a Lyapunov function for the nonlocal elliptic equation associated with the fast process $Y^{\varepsilon}_{t}$, following the construction given in \cite[Section 3]{RD09}.

\begin{lem}\label{lyaf}
The function $w(y) = \sqrt{1+|y|^2}$ is a Lyapunov function for the fast component $Y^{\varepsilon}_{t}$. That is, $\lim_{|y|\to \infty}w(y) = \infty$, and for every $y\in \mathbb{R}^m$, there exists a constant $R_{0}>0$ such that
\begin{equation}
    -\mathcal{L}_{2}w(y) \geq 0,\quad \text{for}\ |y| > R_{0}.
\end{equation}
\end{lem}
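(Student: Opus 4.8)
The plan is to verify the two defining properties of a Lyapunov function separately, with essentially all of the work concentrated in the drift--nonlocal balance of the second condition. The first property, $\lim_{|y|\to\infty}w(y)=\infty$, is immediate since $w(y)=\sqrt{1+|y|^2}\ge|y|$. For the second, recall that $\mathcal{L}_2=-(-\Delta_y)^{\alpha_2/2}+c(x,y)\cdot\nabla_y$, so that
\[
-\mathcal{L}_2 w(y)=(-\Delta_y)^{\alpha_2/2}w(y)-c(x,y)\cdot\nabla_y w(y),
\]
and the goal is to show the right-hand side is nonnegative once $|y|$ is large.

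First I would treat the drift term, which produces the dominant coercive contribution. A direct computation gives $\nabla_y w(y)=y/\sqrt{1+|y|^2}$, so $c(x,y)\cdot\nabla_y w(y)=\langle c(x,y),y\rangle/\sqrt{1+|y|^2}$. Applying the dissipativity condition \eqref{0conditionb0} with the pair $(y,0)$ and adding back the term at $0$ yields $\langle c(x,y),y\rangle=\langle c(x,y)-c(x,0),y\rangle+\langle c(x,0),y\rangle\le -\beta|y|^2+|c(x,0)|\,|y|$, and the linear-growth bound in $(\bf{A_{c}})$ gives $|c(x,0)|\le C_2$. Hence
\[
-c(x,y)\cdot\nabla_y w(y)\ge\frac{\beta|y|^2-C_2|y|}{\sqrt{1+|y|^2}},
\]
which behaves like $\beta|y|$ and tends to $+\infty$ as $|y|\to\infty$, uniformly in $x$.

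Next I would show the nonlocal term stays bounded, so that the coercive drift dominates. Writing $(-\Delta_y)^{\alpha_2/2}w(y)$ as the singular integral and splitting into $B_1(0)$ and its complement, the near part is controlled by a second-order Taylor expansion: since the Hessian satisfies $|D^2 w(\xi)|\le C/\sqrt{1+|\xi|^2}$ and $\int_{B_1(0)}|z|^{2-m-\alpha_2}\,dz<\infty$ (because $\alpha_2<2$), this piece is $O(1)$, in fact $O(1/|y|)$. For the far part I would use that $w$ is globally $1$-Lipschitz, so $|w(y+z)-w(y)|\le|z|$, and then $\int_{B_1^c(0)}|z|^{1-m-\alpha_2}\,dz<\infty$ \emph{precisely because} $\alpha_2>1$; thus the far part is also bounded uniformly in $y$. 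Combining, there is a constant $C>0$, independent of $x$ and $y$, with $|(-\Delta_y)^{\alpha_2/2}w(y)|\le C$.

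Finally I would combine the two estimates: choosing $R_0$ large enough that $(\beta|y|^2-C_2|y|)/\sqrt{1+|y|^2}\ge C$ for all $|y|>R_0$, we obtain
\[
-\mathcal{L}_2 w(y)\ge -C+\frac{\beta|y|^2-C_2|y|}{\sqrt{1+|y|^2}}\ge 0,\qquad |y|>R_0,
\]
as required. The main obstacle is the uniform boundedness of the nonlocal term: its far-field integrability rests essentially on the restriction $1<\alpha_2$, which is exactly where the $\alpha$-stable (rather than Gaussian) structure interacts with the linear growth of $w$, while the near-field estimate in turn needs $\alpha_2<2$ for integrability of the second-order remainder. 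Everything else follows the dissipativity of $(\bf{A_{c}})$ in the standard way.
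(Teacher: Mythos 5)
Your proof is correct and follows essentially the same route as the paper's: bound $|\nabla w|\le 1$ and $\|\nabla^2 w\|\le C$, split the fractional Laplacian into a near part controlled by second-order Taylor expansion (using $\alpha_2<2$) and a far part controlled by the Lipschitz bound (using $\alpha_2>1$), and then let the dissipativity of $c$ dominate via $\langle c(x,y),y\rangle\le-\beta|y|^2+C_2|y|$. If anything, you are slightly more careful than the paper, which drops the denominator $\sqrt{1+|y|^2}$ in its drift estimate (a harmless but real sloppiness), whereas you retain it and correctly note the drift contribution grows like $\beta|y|$, which still suffices.
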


The following lemma establishes the Liouville property, extending the result of \cite[Theorem 3.11]{BM16} to higher dimensions. The proof is
left into Appendix 6.

\begin{lem}\label{LiouP}
Let $V$ be a viscosity subsolution to
\begin{equation}\label{NonlocalL2}
    -\mathcal{L}_{2}  V(y) = 0, \quad y\in\mathbb{R}^m.
\end{equation}
If $V$ is bounded, then $V$ is a constant.
\end{lem}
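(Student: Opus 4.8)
The plan is to combine a penalisation argument based on the Lyapunov function $w(y)=\sqrt{1+|y|^2}$ from Lemma~\ref{lyaf} with a nonlocal strong maximum principle. Write $M:=\sup_{y\in\mathbb{R}^m}V(y)$, which is finite since $V$ is bounded, and recall that $V$ is upper semicontinuous as a viscosity subsolution. Because $\mathbb{R}^m$ is unbounded, the supremum $M$ need not be attained, so the first task is to produce a finite maximiser, and the second is to propagate the value $M$ to all of $\mathbb{R}^m$ through the nonlocal term.

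For the first task I would fix $\eta>0$ and consider $V-\eta w$. Since $V$ is bounded and $w(y)\to\infty$ as $|y|\to\infty$, the function $V-\eta w$ attains a global maximum at some $y_\eta\in\mathbb{R}^m$; equivalently $\varphi_\eta:=\eta w+[V(y_\eta)-\eta w(y_\eta)]$ touches $V$ from above at $y_\eta$. Testing the subsolution inequality for $-\mathcal{L}_{2}V=0$ with $\varphi_\eta$ via the localised form of Definition~\ref{VSD2}, the tail contribution obeys $I^{\delta,c}[V](y_\eta)\le I^{\delta,c}[\varphi_\eta](y_\eta)$ because $V\le\varphi_\eta$ with equality at $y_\eta$; adding the local part $I^{\delta}[\varphi_\eta](y_\eta)$ and the drift term and using that $\varphi_\eta$ differs from $\eta w$ by a constant yields $\eta\,\mathcal{L}_{2}w(y_\eta)\ge 0$, i.e. $-\mathcal{L}_{2}w(y_\eta)\le 0$. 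From the proof of Lemma~\ref{lyaf} one has the quantitative bound $-\mathcal{L}_{2}w(y)\ge\beta|y|^2-C_{3}|y|-C_{2}$, which is strictly positive once $|y|$ exceeds some $R_0$; hence $|y_\eta|\le R_0$ for every $\eta>0$. Letting $\eta_k\downarrow 0$ and extracting $y_{\eta_k}\to\bar y\in\overline{B_{R_0}}$, I would pass to the limit in $V(y)-\eta_k w(y)\le V(y_{\eta_k})-\eta_k w(y_{\eta_k})$ for each fixed $y$: the right-hand side tends to at most $V(\bar y)$ by upper semicontinuity and boundedness of $w$ on $\overline{B_{R_0}}$, so $V(y)\le V(\bar y)$ for all $y$ and therefore $V(\bar y)=M$.

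For the second task, the constant function $\varphi\equiv M$ touches $V$ from above at the finite point $\bar y$. Applying the subsolution inequality and noting $I^{\delta}[\varphi](\bar y)=0$ and $\nabla\varphi(\bar y)=0$, only the nonlocal tail survives, giving $0\le I^{\delta,c}[V](\bar y)=\int_{B^{c}_{\delta}(\bar y)}\bigl(V(\bar y+z)-M\bigr)\tfrac{C_{m,\alpha_2}}{|z|^{m+\alpha_2}}\,dz$. The integrand is nonpositive since $V\le M$, so it must vanish almost everywhere; letting $\delta\to 0$ gives $V=M$ a.e. on $\mathbb{R}^m$, and upper semicontinuity upgrades this to $V\equiv M$ everywhere, so $V$ is constant.

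I expect the main obstacle to be the rigorous handling of the unbounded test function $\eta w$ inside the nonlocal viscosity framework. Since $\eta w$ is not admissible in the whole-space Definition~\ref{VSD1}, one must work with the localised Definition~\ref{VSD2}, control the tail integrals (which converge thanks to $\alpha_2>1$ together with the Lipschitz bound $|\nabla w|<1$ established in Lemma~\ref{lyaf}), and extract the \emph{strict} sign $-\mathcal{L}_{2}w>0$ for large $|y|$ from the quantitative estimate in the proof of Lemma~\ref{lyaf} rather than from its bare $\ge 0$ statement. Once these points are secured, the penalisation and the nonlocal maximum propagation go through as sketched.
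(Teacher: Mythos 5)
Your proof is correct, and it reaches the conclusion by a genuinely more self-contained route than the paper's, even though both arguments pivot on the same penalization $V-\eta w$ with the Lyapunov function of Lemma~\ref{lyaf}. Where you differ: to localize the supremum, the paper first proves that $V_\eta=V-\eta w$ is a viscosity subsolution of $-\mathcal{L}_2 u=0$ on the exterior domain $B_R^c(0)$ (a contradiction argument that itself invokes the maximum principle, Corollary~\ref{Maxp}), then applies the maximum principle on annuli $\{R\le |y|\le M_\eta\}$ and lets $\eta\to 0$; you instead test $V$ directly with $\eta w+\mathrm{const}$ at the global maximizer $y_\eta$ of $V-\eta w$, use $I^{\delta,c}[V](y_\eta)\le I^{\delta,c}[\varphi_\eta](y_\eta)$ to get $-\mathcal{L}_2 w(y_\eta)\le 0$, and confine $y_\eta$ to a fixed ball via the quantitative bound $-\mathcal{L}_2 w\ge \beta|y|^2-C_3|y|-C_2$ from the proof (not just the statement) of Lemma~\ref{lyaf}. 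For the final step, the paper cites the strong maximum principle of Corollary~\ref{Maxp} as a black box, whereas you reprove exactly the propagation needed: touching $V$ from above by the constant $M$ at the maximizer forces the tail integral $\int_{B_\delta^c}\bigl(V(\bar y+z)-M\bigr)\frac{C_{m,\alpha_2}}{|z|^{m+\alpha_2}}\,dz$ to vanish, hence $V=M$ a.e.\ because the $\alpha$-stable L\'evy measure has full support, and upper semicontinuity upgrades this to $V\equiv M$. What each approach buys: yours uses only Definition~\ref{VSD2} and the Lyapunov estimate, avoids Corollary~\ref{Maxp} entirely, and propagation is global in one step precisely because the jump measure charges all of $\mathbb{R}^m$; the paper's pattern (exterior subsolution plus strong maximum principle) mirrors the local second-order scheme of Alvarez--Bardi and would remain usable for nonlocal operators whose L\'evy measures do not have full support, where your one-step propagation would fail. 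You also correctly identified and handled the two delicate points: the unbounded test function $\eta w$ is admissible only through the localized definition (with tails evaluated on $V$ itself, which is bounded, and on $\eta w$, integrable since $\alpha_2>1$), and the strict sign of $-\mathcal{L}_2 w$ for large $|y|$ must come from the quantitative estimate.
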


\subsection{Effective Hamiltonian and effective initial value}

In this section, we define the effective Hamiltonian and the effective initial value via the cell problem. We introduce the following $\varepsilon$-cell problem
\begin{equation}\label{cell}
      -\mathcal{L}_{2} w_{\varepsilon}(y) +  \varepsilon w_{\varepsilon}(y)=  -H(\Bar{x},y,\Bar{p}), \quad \text{in} \   \mathbb{R}^m,
\end{equation}
whose solution $w_{\varepsilon}$ is called approximate corrector. In order to determine the initial value, we fix slow variables $\Bar{x},\Bar{p}:=\nabla_{x}u^{\varepsilon}$. The next lemma states that $-\varepsilon w_{\varepsilon}$ converges to the effective Hamiltonian $\Bar{H}$ as $\varepsilon\to 0$.
\begin{lem}\label{cellL}
For every $\Bar{x}\in\mathbb{R}^n,\Bar{p}\in\mathbb{R}^n$, and $\varepsilon \in (0,1)$, there exists a solution $w_\varepsilon \in C^{1}(\mathbb{R}^m)$ to the $\varepsilon$-cell problem (\ref{cell}) such that
\begin{equation}
    \lim_{\varepsilon\to 0}\varepsilon w_{\varepsilon}(y) :=  -\Bar{H}(y,\Bar{p}_{x}) = -\int_{\mathbb{R}^m} H(\Bar{x},y,\Bar{p}_{x}) d\mu^{\Bar{x}}(y) \quad \text{local uniformly in } \mathbb{R}^m,
\end{equation}
where $\mu^{\Bar{x}}$ is the invariant probability measure on $\mathbb{R}^m$ to $Y^{\Bar{x}}_{s}$.
\end{lem}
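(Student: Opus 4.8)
The plan is to represent the approximate corrector probabilistically through the frozen fast dynamics, to read off existence and a uniform bound on $\epsilon w_\epsilon$ directly from this formula, to upgrade the regularity to $C^1$ by nonlocal Schauder estimates, and finally to identify $-\lim_\epsilon \epsilon w_\epsilon$ with the effective Hamiltonian $\bar H$ through an Abelian limit driven by the exponential ergodicity of Proposition \ref{ergodic11}.

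First I would fix $\bar x,\bar p$ and note that, by assumptions $(\bf{A_{b}})$ and $(\bf{A_{L}})$, the map $y\mapsto H(\bar x,y,\bar p)$ is bounded and Lipschitz: indeed $|b(\bar x,y,v)|\le K_2(1+|\bar x|)$ and $\sup_{y,v}|L(\bar x,y,v)|\le K(1+|\bar x|)$ yield $\|H(\bar x,\cdot,\bar p)\|_0\le M:=K_2(1+|\bar x|)|\bar p|+K(1+|\bar x|)$. Setting $f(y):=-H(\bar x,y,\bar p)$, the cell problem \eqref{cell} is the linear nonlocal equation $\epsilon w_\epsilon-\mathcal{L}_2 w_\epsilon=f$ with strictly positive zeroth-order coefficient $\epsilon>0$, and I would define
\begin{equation*}
w_\epsilon(y)=\mathbb{E}\int_0^\infty e^{-\epsilon s}f\big(Y^{\bar x,y}_s\big)\,ds=-\mathbb{E}\int_0^\infty e^{-\epsilon s}H\big(\bar x,Y^{\bar x,y}_s,\bar p\big)\,ds,
\end{equation*}
where $Y^{\bar x,y}$ solves the frozen equation \eqref{0nsde} with $x'=\bar x$. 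By Dynkin's formula this resolvent representation furnishes a (viscosity) solution of \eqref{cell}, with $\|w_\epsilon\|_0\le M/\epsilon$, so that $\|\epsilon w_\epsilon\|_0\le M$ uniformly in $\epsilon\in(0,1)$; uniqueness among bounded solutions follows from the comparison principle for the nonlocal operator. To reach $w_\epsilon\in C^1(\mathbb{R}^m)$ I would then invoke the interior Schauder estimates for the fractional Laplacian of order $\alpha_2\in(1,2)$ (cf. \cite{BCI2012}): since $f$ is bounded and Lipschitz and the transport coefficient $c(\bar x,\cdot)$ is smooth, a standard bootstrap upgrades the bounded viscosity solution to $C^{1,\beta}_{\mathrm{loc}}$, using crucially that $\alpha_2>1$ so the regularity gain exceeds one derivative.

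Finally, for the limit I would use $\epsilon\int_0^\infty e^{-\epsilon s}\,ds=1$ and $P^{\bar x}_sH(\bar x,\cdot,\bar p)(y)=\mathbb{E}\big[H(\bar x,Y^{\bar x,y}_s,\bar p)\big]$ to write
\begin{equation*}
-\epsilon w_\epsilon(y)-\bar H(\bar x,\bar p)=\epsilon\int_0^\infty e^{-\epsilon s}\Big(P^{\bar x}_sH(\bar x,\cdot,\bar p)(y)-\mu^{\bar x}\big(H(\bar x,\cdot,\bar p)\big)\Big)\,ds,
\end{equation*}
where $\bar H(\bar x,\bar p)=\int_{\mathbb{R}^m}H(\bar x,y',\bar p)\,\mu^{\bar x}(dy')$ is the ($y$-independent) effective Hamiltonian. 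Applying Proposition \ref{ergodic11} to $\widetilde\varphi=H(\bar x,\cdot,\bar p)$ (mollifying if one insists on $C^1_b$ regularity, the estimate being stable in the Lipschitz norm since $\|\widetilde\varphi\|_1$ controls only sup- and gradient-norms) gives $|P^{\bar x}_sH(\bar x,\cdot,\bar p)(y)-\mu^{\bar x}(H(\bar x,\cdot,\bar p))|\le C e^{-\gamma s/8}(1+|y|^{1/2})$, whence
\begin{equation*}
\big|-\epsilon w_\epsilon(y)-\bar H(\bar x,\bar p)\big|\le C(1+|y|^{1/2})\,\frac{\epsilon}{\epsilon+\gamma/8}\xrightarrow[\epsilon\to0]{}0 .
\end{equation*}
Because the bound on the right is uniform for $y$ in any compact set, this delivers the claimed local uniform convergence.

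The step I expect to be the main obstacle is the rigorous passage from the probabilistic formula to a genuine $C^1$ solution: verifying that the resolvent representation solves \eqref{cell} in the viscosity sense and then bootstrapping the nonlocal Schauder estimates to extract a full derivative from a merely bounded-Lipschitz right-hand side in an unbounded domain carrying the unbounded drift $c(\bar x,\cdot)$. By contrast, the identification of the ergodic limit is comparatively soft once Proposition \ref{ergodic11} is available, with the weight $(1+|y|^{1/2})$ being precisely what promotes pointwise convergence to local uniform convergence.
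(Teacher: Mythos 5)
Your proposal is correct, and it reaches the conclusion by a genuinely different route from the paper. The paper proceeds in the opposite order: it first gets existence and uniqueness of the viscosity solution $w_\epsilon$ by the Perron--Ishii method and the comparison principle, bounds $\epsilon w_\epsilon$ by comparison with the constants $\pm\sup_y|h(y)|$, then uses nonlocal Schauder estimates plus Ascoli--Arzel\`a to extract a locally uniform subsequential limit $v$ solving $-\mathcal{L}_2 v=0$, invokes the Liouville property (Lemma \ref{LiouP}, whose proof requires the Lyapunov function of Lemma \ref{lyaf}) to conclude $v$ is constant, and only at the very end brings in the stochastic representation \eqref{Storf} together with Fubini against $\mu^{\bar x}$ to identify the constant as $-\int h\,d\mu^{\bar x}$. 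You instead take the resolvent representation as the \emph{definition} of $w_\epsilon$ and replace the entire compactness--Liouville machinery by a single Abelian-limit computation: writing $-\epsilon w_\epsilon(y)-\bar H=\epsilon\int_0^\infty e^{-\epsilon s}\bigl(P^{\bar x}_s h(y)-\mu^{\bar x}(h)\bigr)ds$ and applying the exponential ergodicity of Proposition \ref{ergodic11}. This buys you something the paper does not get: a quantitative rate $O\bigl(\epsilon(1+|y|^{1/2})\bigr)$ and convergence of the full family rather than of subsequences, with no need for the Liouville property at all; your mollification remark correctly handles the fact that $h$ is only Lipschitz while Proposition \ref{ergodic11} is stated for $C^1_b$ test functions. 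What the paper's longer route buys in exchange is robustness: the compactness--Liouville scheme (inherited from Alvarez--Bardi) does not need an explicit solution formula or a mixing rate, so it would survive in settings where the cell problem is genuinely nonlinear in $w_\epsilon$ or where only qualitative ergodicity is known. On the one step both treatments leave thin --- the claimed $C^1(\mathbb{R}^m)$ regularity of $w_\epsilon$ --- you are actually more careful than the paper: the paper's proof only establishes equi-H\"older continuity of $\epsilon w_\epsilon$ via Schauder estimates and never returns to the $C^1$ assertion in the statement, whereas you at least flag the bootstrap (local boundedness of the drift on balls, $\alpha_2>1$) needed to obtain it.
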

\begin{proof}
We denote $h(y): =  H(\Bar{x},y,\Bar{p})$. Since $h(y)$ is bounded and Lipschitz continuous, by the resolvent associated with the semigroup $P_s:= e^{s\mathcal{L}_{2}}$ (see e.g. \cite{ZX16}), the $\varepsilon$-cell problem (\ref{cell}) has a unique solution
\begin{equation*}
   w_{\varepsilon}(y) :=  -\int^{\infty}_{0} P_{s} h(y) e^{-\varepsilon s}ds.
\end{equation*}
Note that $\sup_{y \in\mathbb{R}^m}(h(y) \vee 0)$ is a subsolution to (\ref{cell}), and $\inf_{y \in\mathbb{R}^m}(h(y) \wedge 0)$ are supersolution to (\ref{cell}). Using the comparison principle with $\sup_{y \in\mathbb{R}^m}(h(y) \vee 0)$ and $\inf_{y \in\mathbb{R}^m}(h(y) \wedge 0)$, the functions $\varepsilon w_{\varepsilon}$ are uniformly bounded
\begin{equation}
    |\varepsilon w_{\varepsilon}(y)| \leq  \sup_{y \in\mathbb{R}^m}(h(y) \vee 0) - \inf_{y \in\mathbb{R}^m}(h(y) \wedge 0):= C_{h}.
\end{equation}
Since $-\mathcal{L}_{2} (\varepsilon w_{\varepsilon}) = -\varepsilon w_{\varepsilon}-\varepsilon h$, we have $|\mathcal{L}_{2} (\varepsilon w_{\varepsilon})| \leq 2\varepsilon C_{h}$.
By Schauder estimates for linear nonlocal elliptic equations (see e.g. \cite{LC09}),  the family $\{\varepsilon w_{\varepsilon}\}_{\varepsilon \in (0,1)}$ is equi $\beta$-H\"older continuous in some $B_{R}(0)$ with some $\beta \in (0,1)$ and $R>0$. Thus by the Arzel\'a-Ascoli theorem, there exists a subsequence $\varepsilon_{n} \to 0$ such that $\varepsilon_{n}w_{\varepsilon_{n}} \to v$ locally uniformly as $n \to 0$, and $-\mathcal{L}_{2}v = 0$ in $\mathbb{R}^m$. Thus, by the Liouville property (Lemma \ref{LiouP}), $v$ is constant.

Note that the solution $w_{\varepsilon}$ has stochastic representation
\begin{equation}\label{Storf}
    w_{\varepsilon}(y)= -\mathbb{E}\int^{\infty}_{0} h(Y^{\Bar{x},y}_{r}) e^{-\varepsilon r}dr.
\end{equation}
Then integrating both sides of above representation formula (\ref{Storf}) with respect to $\mu$ and using the Fubini theorem, we get
\begin{equation}
    \int_{\mathbb{R}^m} w_{\varepsilon}(y)d\mu(y) = -\int^{\infty}_{0}\int_{\mathbb{R}^m}h(y)d\mu(y)e^{-\varepsilon r}dr = -\frac{1}{\varepsilon} \int_{\mathbb{R}^m}h(y)d\mu(y).
\end{equation}
Thus for every convergence subsequence $\varepsilon_{n}w_{\varepsilon_{n}}$, there exists a unique constant limit $v$, so that
\begin{equation}
    \lim_{n\to \infty}\varepsilon_{n}w_{\varepsilon_{n}} = v = -\int_{\mathbb{R}^m}h(y)d\mu(y):= -\Bar{H}(\cdot,\Bar{p}_{x}), \quad \text{local uniformly in } \mathbb{R}^m.
\end{equation}
\end{proof}

To study the effective initial data, we introduce the Cauchy cell problem
\begin{equation}\label{cellI}
 \left\{
 \begin{aligned}
   &  \partial_{s}w_{x} =  \mathcal{L}_{2}w_{x}, & \text{in} \  (0,\infty)\times \mathbb{R}^m,\\
   &  w_{x}(0,y) = g(x,y),  & \text{in} \   \mathbb{R}^m,
   \end{aligned}
   \right.
\end{equation}
In the next lemma, we show that the effective initial data is given by the following Cauchy cell problem.

\begin{lem}\label{cellT}
Under assumptions $(\bf{A_{b}})$, $(\bf{A_{c}})$, $(\bf A_{L})$, $(\bf A_{g})$ in Section 2, for every fixed $x\in \mathbb{R}^n$, the Cauchy problem (\ref{cellI}) has a unique classical solution $w$, and
\begin{equation}
    \lim_{s \to \infty} w_{x}(s, y) = \int_{\mathbb{R}^m} g(x,y)\mu^{x}(dy) = \Bar{g}(x) \quad \text{local uniformly in } y,
\end{equation}
where $\mu^{x}$ denotes the ergodic measure on $\mathbb{R}^m$ associated with the frozen process $Y^{x,y}_{s}$.
\end{lem}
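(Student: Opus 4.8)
The plan is to establish this lemma in two parts: first the well-posedness of the Cauchy cell problem (\ref{cellI}), and second the large-time convergence of its solution to the ergodic average $\Bar{g}(x)$. Throughout, $x\in\mathbb{R}^n$ is fixed, so (\ref{cellI}) is a linear parabolic nonlocal Cauchy problem in the $y$ variable governed by the generator $\mathcal{L}_{2} = -(-\Delta_y)^{\alpha_2/2} + c(x,y)\cdot\nabla_y$ associated with the frozen equation (\ref{0nsde}).

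For existence and uniqueness, I would appeal to the stochastic representation: the natural candidate is
\begin{equation}\label{eq:repg}
    w_{x}(r,y) = \mathbb{E}\left[ g\left(x, Y^{x,y}_{r}\right) \right],
\end{equation}
where $Y^{x,y}_{r}$ solves the frozen SDE (\ref{0nsde}) started at $y$. Under $(\bf{A_{c}})$ the frozen SDE has a unique strong solution with a well-defined Markov semigroup $P^{x}_{r}$, and the assumption $g\in\mathcal{C}^{3,3}_{b}$ under $(\bf{A_{r}})$ (or Lipschitz regularity under $(\bf{A_{g}})$) transfers smoothness in $y$ to $w_x$ through the flow, so that $w_x$ is a classical solution to the Kolmogorov backward equation $\partial_r w_x = \mathcal{L}_2 w_x$ with $w_x(0,y)=g(x,y)$. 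Uniqueness in the class of bounded solutions follows from the comparison principle for the linear nonlocal parabolic equation.

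The convergence statement is where the real content lies, and it is a direct consequence of the exponential ergodicity already recorded in Proposition \ref{ergodic11}. Indeed, in the notation of that proposition, $w_x(r,y) = P^{x}_r g(x,\cdot)(y)$, so
\begin{equation*}
    \left| w_x(r,y) - \Bar{g}(x) \right| = \left| P^{x}_r g(x,\cdot)(y) - \mu^{x}\big(g(x,\cdot)\big) \right| \leq C \, \| g(x,\cdot) \|_{1} \, e^{-\gamma r/8}\left(1+|y|^{1/2}\right),
\end{equation*}
which tends to $0$ as $r\to\infty$, uniformly for $y$ in compact sets; this is exactly local uniform convergence in $y$. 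Here $\mu^{x}\big(g(x,\cdot)\big) = \int_{\mathbb{R}^m} g(x,y)\,\mu^{x}(dy) = \Bar{g}(x)$ by definition, and $\|g(x,\cdot)\|_1 \leq K(1+|x|) + \|\nabla_y g\|_0 < \infty$ by $(\bf{A_{g}})$ together with the boundedness of $\nabla_y g$ from $(\bf{A_{r}})$.

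The main obstacle I anticipate is matching the regularity hypotheses on $g$ to the form of Proposition \ref{ergodic11}, which is stated for test functions $\widetilde{\varphi}\in C^1_b$: since $g(x,\cdot)$ is bounded with bounded $y$-gradient, it lies in $C^1_b(\mathbb{R}^m)$ for each fixed $x$, so the proposition applies directly and no additional truncation is needed. A secondary technical point is justifying that the representation (\ref{eq:repg}) is genuinely a \emph{classical} (not merely viscosity) solution; this requires differentiating under the expectation, which is justified by the smooth dependence of the flow $Y^{x,y}_r$ on the initial datum $y$ under the Lipschitz and dissipativity assumptions in $(\bf{A_{c}})$, together with the moment bounds on the frozen process. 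Once these regularity matters are settled, both the well-posedness and the convergence follow without further difficulty.
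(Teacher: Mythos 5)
Your proposal follows essentially the same route as the paper's own proof: the stochastic representation $w_{x}(r,y)=\mathbb{E}\,g(x,Y^{x,y}_{r})$ gives the unique classical solution, and the convergence is then read off from the exponential ergodicity of Proposition \ref{ergodic11}, with the factor $(1+|y|^{1/2})$ yielding local uniformity in $y$. Your extra care about matching $g(x,\cdot)$ to the $C^{1}_{b}$ hypothesis of Proposition \ref{ergodic11} is a legitimate refinement the paper glosses over (note, though, that the lemma's stated hypotheses do not include $(\bf{A_{r}})$, so strictly one should either approximate the Lipschitz function $g(x,\cdot)$ by $C^{1}_{b}$ functions or extend the ergodicity estimate to the Lipschitz class), but the substance of the argument is the same.
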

\begin{proof}
By semigroup estimates associated with generator $\mathcal{L}_{2}$ (see e.g. \cite{ZX16}), the Cauchy cell problem (\ref{cellI}) has a unique classical solution $w_{x}$ with stochastic representation
\begin{equation}
    w_{x}(s,y)= P^{x}_{s} g(x,y) = \mathbb{E}g(x,Y^{x,y}_{s}).
\end{equation}
Using the exponential ergodicity (Proposition \ref{ergodic11}),
\begin{equation*}
    \lim_{s \to \infty} w_{x}(s, y) = \lim_{s \to \infty} \mathbb{E}g(x,Y^{x,y}_{s}) = \int_{\mathbb{R}^m} g(x,y)\mu^{x}(dy) = \Bar{g}(x), \quad \text{local uniformly in } y.
\end{equation*}
\end{proof}

To study the Lipschitz continuity of the effective Hamiltonian \(\bar{H}\), we need the following estimates for the jump diffusion \(Y^{x,y}_{s}\) from \cite[Lemma 4]{YZ24}.

\begin{prop}
\label{ESYxy}
Suppose that assumptions $(\bf{A_{b}})$, $(\bf{A_{c}})$ hold. Then for every $s>0$, $x_{1},x_{2}\in\mathbb{R}^n$, $y_{1},y_{2}\in \mathbb{R}^m$, we have
\begin{equation}
\label{dY}
    \left|Y^{x_1,y_1}_s-Y^{x_2,y_2}_s\right|^2 \leq e^{-\frac{\beta s}{2}}|y_1-y_2|^2 +C(\|c_1\|,\beta)|x_1-x_2|^2.
\end{equation}
where $C$, $\beta$ is a positive constant independent of $s$.
\end{prop}
\begin{proof}
By the equation \eqref{0nsde}, we have
\begin{equation*}
d\left( Y^{x_{1},y_{1}}_{s} - Y^{x_{2},y_{2}}_{s} \right)=\left[c(x_{1}, Y^{x_1,y_1}_s)-c(x_{2},Y^{x_2,y_2}_s)\right] dt, \quad Y^{x_1,y_1}_t -Y^{x_2,y_2}_t =y_1-y_2.
\end{equation*}
Multiplying both sides by $2\left(Y^{x_1,y_1}_s-Y^{x_2,y_2}_s\right)$, by Assumption ($\bf{A_{c}}$) and Young's inequality, we have
\begin{equation*}
\begin{aligned}
\frac{d}{ds}\left|Y^{x_1,y_1}_s-Y^{x_2,y_2}_s \right|^2&=2\left\langle c(x_{1}, Y^{x_1,y_1}_s)-c(x_{2},Y^{x_2,y_2}_s), Y^{x_1,y_1}_s-Y^{x_2,y_2}_s \right\rangle \\
& \leq 2\left\langle c(x_{1}, Y^{x_1,y_1}_s)-c(x_{1},Y^{x_2,y_2}_s), Y^{x_1,y_1}_s-Y^{x_2,y_2}_s \right\rangle \\
&\quad +2\left\langle c(x_{1}, Y^{x_2,y_2}_s)-c(x_{2},Y^{x_2,y_2}_s), Y^{x_1,y_1}_s-Y^{x_2,y_2}_s \right\rangle \\
& \leq -2\beta\left|Y^{x_1,y_1}_s-Y^{x_2,y_2}_s\right|^2+C(\|c_1\|,\beta)|x_1-x_2|\left|Y^{x_1,y_1}_s-Y^{x_2,y_2}_s\right|\\
& \leq - \beta \left|Y^{x_1,y_1}_s-Y^{x_2,y_2}_s\right|^2+ C(\|c_1\|,\beta)|x_1-x_2|^2.
\end{aligned}
\end{equation*}
Hence, the Gr\"{o}nwall inequality yields that
\begin{equation*}
\left|Y^{x_1,y_1}_s-Y^{x_2,y_2}_s\right|^2 \leq e^{-\beta s}|y_1-y_2|^2 +C(\|c_1\|,\beta)|x_1-x_2|^2.
\end{equation*}
\end{proof}

In the next lemma, we show the Lipschitz continuity of the effect Hamiltonian $\Bar{H}$ and the effect terminal data $\Bar{g}$.
\begin{lem}
Under assumptions $(\bf{A_{b}})$, $(\bf{A_{c}})$, $(\bf A_{L})$, $(\bf A_{g})$, the effect Hamiltonian $\Bar{H}: \mathbb{R}^m\times\mathbb{R}^m \to \mathbb{R}$ and the effect terminal data $\Bar{g}$ are Lipschitz continuous with respect to $x$ and $p$.
\end{lem}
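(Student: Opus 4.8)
The plan is to write both $\Bar{H}$ and $\Bar{g}$ as ergodic limits of the frozen dynamics \eqref{0nsde} and then transfer the Lipschitz continuity of $H$ and $g$ through the $x$-dependent measure $\mu^{x}$ by means of the time-uniform coupling estimate of Proposition \ref{ESYxy}. Concretely, for fixed $x$ (and fixed $p$ in the case of $H$), assumptions $(\bf{A_{b}})$, $(\bf{A_{L}})$, $(\bf{A_{g}})$ guarantee that $H(x,\cdot,p)$ and $g(x,\cdot)$ are bounded and Lipschitz in $y$, so ergodicity of $Y^{x,y}_{s}$ yields the representations
\begin{equation*}
\Bar{H}(x,p) = \lim_{s\to\infty}\mathbb{E}\, H\bigl(x, Y^{x,y}_s, p\bigr), \qquad \Bar{g}(x) = \lim_{s\to\infty}\mathbb{E}\, g\bigl(x, Y^{x,y}_s\bigr),
\end{equation*}
for every $y\in\mathbb{R}^m$; the limits exist and are independent of $y$ by Proposition \ref{ergodic11} (extended from $C^1_b$ to bounded Lipschitz integrands by mollification).

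For the Lipschitz dependence on $x$, I would fix $p$ and a \emph{common} starting point $y$ and compare the two representations. Using the uniform Lipschitz continuity of $H$ in $(x,y)$,
\begin{equation*}
\bigl|H(x_1, Y^{x_1,y}_s, p) - H(x_2, Y^{x_2,y}_s, p)\bigr| \le L_H\bigl(|x_1-x_2| + |Y^{x_1,y}_s - Y^{x_2,y}_s|\bigr).
\end{equation*}
Taking expectations and applying Cauchy--Schwarz together with Proposition \ref{ESYxy} with $y_1=y_2=y$ — so the exponentially decaying initial term drops out — gives the key time-uniform bound $\mathbb{E}|Y^{x_1,y}_s - Y^{x_2,y}_s| \le C(\|c\|_1,\beta)^{1/2}|x_1-x_2|$. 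Passing to the limit $s\to\infty$ then yields $|\Bar{H}(x_1,p) - \Bar{H}(x_2,p)| \le C|x_1-x_2|$, and the identical argument with $g$ in place of $H$ gives the Lipschitz continuity of $\Bar{g}$ in $x$ (with no $p$-dependence for $\Bar{g}$).

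The Lipschitz dependence of $\Bar{H}$ on $p$ is more direct, since the measure plays no role: because $\mu^{x}$ is a probability measure and $H(x,y,\cdot)$ is Lipschitz in $p$ with constant controlled by $\sup_{y,v}|b(x,y,v)|$,
\begin{equation*}
\bigl|\Bar{H}(x,p_1) - \Bar{H}(x,p_2)\bigr| \le \int_{\mathbb{R}^m}\bigl|H(x,y,p_1) - H(x,y,p_2)\bigr|\,\mu^{x}(dy) \le L_p\,|p_1-p_2|.
\end{equation*}
I would note that, because of the linear growth of $b$ in $x$, the constant $L_p$ depends on $x$ (and the $x$-Lipschitz constant above depends on $p$), so that the conclusion is Lipschitz continuity in each variable with constants locally uniform in the other — which is exactly what the subsequent viscosity-solution analysis on compact sets requires.

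The main obstacle is the joint $x$-dependence of both the integrand $H(x,\cdot,p)$ and the averaging measure $\mu^{x}$: one cannot estimate or differentiate $\mu^{x}$ in $x$ in isolation. The device that overcomes this is precisely Proposition \ref{ESYxy}, whose coupling bound is \emph{uniform in} $s$ once the two frozen processes are started from the same point; this uniformity is what permits the ergodic limit to be taken \emph{after} the pathwise Lipschitz estimate, so that the $x$-dependence of $\mu^{x}$ enters only through the controllable difference $Y^{x_1,y}_s - Y^{x_2,y}_s$ rather than as a separate perturbation of the measure.
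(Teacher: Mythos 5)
Your proposal is correct and follows essentially the same route as the paper: both split the increment into a $p$-part (bounded directly under the probability measure $\mu^{x}$) and an $x$-part handled by representing $\Bar{H}$ through the finite-time expectations $\mathbb{E}\,H(x,Y^{x,y}_s,p)$, invoking the Lipschitz continuity of $H$ together with the coupling estimate of Proposition \ref{ESYxy}, and letting $s\to\infty$ so the exponentially decaying initial term disappears. Your added remarks — the mollification step to extend Proposition \ref{ergodic11} to Lipschitz integrands, and the observation that the constants are only locally uniform in the other variable because of the linear growth of $b$ — are refinements the paper glosses over, but they do not change the argument.
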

\begin{proof}
For every $(x_{1},p_{1}),(x_{2},p_{2}) \in \mathbb{R}^m\times\mathbb{R}^m $, we have
\begin{align}
    \bar{H}(x_{1},p_{1}) - \bar{H}(x_{2},p_{2}) = \left[\bar{H}(x_{1},p_{1}) - \bar{H}(x_{1},p_{2})\right] + \left[\bar{H}(x_{1},p_{2}) - \bar{H}(x_{2},p_{2})\right].
\end{align}
Recall the definition of effect Hamiltonian  $\bar{H}(x,p)=\int_{\mathbb{R}^m} H(x,y, p) d\mu^{x}(dy)$, where $\mu^{x}$ is the ergodic measure to the fast component $Y^{x,y}_{s}$. By Lipschitz continuity of the Hamiltonian $H(x',y,p)$, we obtain
\begin{align}
    \left|\bar{H}(x_{1},p_{1}) - \bar{H}(x_{1},p_{2})\right| \leq & \int_{\mathbb{R}^m} |H(x_{1},y, p_{1}) - H(x_{1},y, p_{2})| d\mu^{x_{1}}(dy)\leq  C|p_{1}-p_{2}|.
\end{align}
By Proposition \ref{ESYxy} and Lipschitz continuity of $H(x',y',p)$, we get for every $T>0$, $y_{1},y_{2} \in \mathbb{R}^{m}$, and some $\beta'>0$,
\begin{equation}
\begin{aligned}
    |\bar{H}(x_{1},p_{2}) - \bar{H}(x_{2},p_{2})| \leq & \left|\bar{H}(x_{1},p_{2}) - \mathbb{E} H(x_{1},Y^{x_{1},y_{1}}_{s},p_{2}) \right| \\&+ \left|\mathbb{E} H(x_{1},Y^{x_{1},y_1}_{s},p_{2}) - \mathbb{E} H(x_{2},Y^{x_{2},y_{2}}_{s},p_{2}) \right| \nonumber\\
    & + \left|\mathbb{E} H(x_{2},Y^{x_{2},y_{2}}_{s},p_{2}) - \bar{H}(x_{2},p_{2})\right| \nonumber \\
    \leq & \left|\int_{\mathbb{R}^m} H(x_{1},y,p_{2}) \mu^{x_{1}}(dy) - \mathbb{E} H(x_{1},Y^{x_{1}}_{s},p_{2}) \right| \nonumber \\
    & + C\left[\mathbb{E}|Y^{x_{1},y_{1}}_{s} - Y^{x_{2},y_{2}}_{s}| + |x_{1}-x_{2}|\right] \nonumber \\
    & + \left| \int_{\mathbb{R}^m} H(x_{2},y,p_{2})\mu^{x_{2}}(dy) - \mathbb{E} H(x_{2},Y^{x_{2}}_{s},p_{2})\right| \nonumber \\
    \leq & e^{-\beta s}|y_{1}-y_{2}| + C|x_{1}-x_{2}|.
\end{aligned}
\end{equation}
Letting $s\to \infty$, we arrive at the Lipschitz continuity of the definition of effect Hamiltonian $\bar{H}$. The Lipschitz  continuity of the terminal data $\Bar{g}$ can be proved using a similar argument.

\end{proof}
We now establish the uniform estimate of the solution $\{X^{\varepsilon}_t\}_{t\geq 0}$.
\begin{lem}
\label{slo}
Suppose the assumptions in Theorem \ref{theorem1} hold. Let $v$ be any admissible control, let $(X^{\varepsilon},Y^{\varepsilon})$ be the solution
of \eqref{slo0} corresponding to $v$. Then for any $\varepsilon > 0$, there exists a unique strong solution $\{(X^{\varepsilon}_s,Y^{\varepsilon}_s)\}_{t \leq s \leq T}$ to system \eqref{slo0}. Moreover, for any $p\in [1, \alpha_1)$, there exist constants $C(p,T, K_2)$ such that
\begin{equation}
\sup_{v\in U}\sup_{\varepsilon \in (0,1)}\mathbb{E}\left(\sup_{s\leq [t,T]}|X^{\varepsilon}_s|^{p}\right) \leq C(p,T,K_2)(1+|x|^p).
\end{equation}
\end{lem}
\begin{proof}
By Burkholder-Davis-Gundy's inequality, it is easy to know
\begin{equation}
\sup_{v\in U}\sup_{\varepsilon \in (0,1)}\mathbb{E}\left(\sup_{s\leq [t,T]}|X^{\varepsilon}_s|^{p}\right) \leq C(p)|x|^{p}+C(p,T, K_2)\int^{T}_{t}\sup_{v\in U}\sup_{\varepsilon \in (0,1)}\mathbb{E}\left(\sup_{s\leq [t,T]}|X^{\varepsilon}_r|^{p}\right)dr+C(p,T).
\end{equation}
Thus, Gr\"onwall inequality yields
\begin{equation}
\sup_{v\in U}\sup_{\varepsilon \in (0,1)}\mathbb{E}\left(\sup_{s\leq [t,T]}|X^{\varepsilon}_s|^{p}\right) \leq C(p,T,K_2)(1+|x|^p).
\end{equation}
\end{proof}

\subsection{The convergence result}

Now we prove our main convergence result in theorem \ref{theorem1}. Motivated by the argument for second order differential operator cases from \cite{BCM10}, the proof of Theorem \ref{theorem1} is based on relaxed semilimits, Liouville property, and perturbed test function methods.

\begin{proof}(Proof of Theorem \ref{theorem1})
The proof is divided into five steps.

\textbf{Step 1} (relaxed semilimits).
Since the solutions $u^{\varepsilon}$ are locally equi-bounded in $\mathbb{R}^{+}\times \mathbb{R}^{n}$, uniformly in $\varepsilon$, we define the relaxed semilimits as
\begin{equation}
    \Bar{u}(t,x,y) = \limsup_{t'\to t, x'\to x, y'\to y, \varepsilon\to 0} u^{\varepsilon}(t,x,y), \quad \underline{u}(t,x,y) = \liminf_{t'\to t, x'\to x, y'\to y, \varepsilon\to 0} u^{\varepsilon}(t,x,y)
\end{equation}
for $(t,x,y) \in (0,T)\times \mathbb{R}^n \times \mathbb{R}^m$. The terminal value is given by
\begin{equation}
    \Bar{u}(T,x,y) = \limsup_{t'\to T, x'\to x, y'\to y} \Bar{u}(t,x,y), \quad \underline{u}(T,x,y) = \liminf_{t'\to T, x'\to x, y'\to y}  \underline{u}(t,x,y).
\end{equation}
By assumptions$(\bf B_{L})$, $(\bf B_{g})$, together with the moment estimate in Lemma \ref{slo}, we get that there exists a constant $K>0$ independent with $(t,x,y)$ and $\varepsilon$, such that for every $p \in [1,\alpha_{1})$,
\begin{equation}\label{uppueE}
    |u^{\varepsilon}(t,x,y)| = \left|\sup_{v \in {U}}J^{\varepsilon}(v,x,y,t) \right| \leq K(1+|x|^p).
\end{equation}
Since $u^{\varepsilon}$ is $p$th order polynomial growth with respect to $x$, the relaxed semi-limits $\Bar{u}$ and $\underline{u}$ are also $p$th order polynomial growth with respect to $x$, i.e. for every $(t,x,y) \in [0,T]\times \mathbb{R}^n \times\mathbb{R}^m$ and $p \in [1,\alpha_{1})$,
\begin{equation}\label{boundu}
    |\Bar{u}(t,x,y)| \leq K(1+|x|^p),\quad |\underline{u}(t,x,y)| \leq K(1+|x|^p).
\end{equation}

\textbf{Step 2} ($\Bar{u}$ and $\underline{u}$ do not depend on $y$). Now we show that $\Bar{u}$ and $\underline{u}$ do not depend on $y$ for every $(t,x)\in [0,T)\times\mathbb{R}^n$ by Liouville property. We only prove the claim for $\Bar{u}$, since the proof for $\underline{u}$ is completely analogous.
We first show that for every fixed $\Bar{t},\Bar{x} \in (0,T)\times\mathbb{R}^n$, the function $y \mapsto \Bar{u}(\Bar{t},\Bar{x},y)$ is a subsolution to (\ref{NonlocalL2}).
For every fixed $\bar{y} \in \mathbb{R}^m$, let $\phi \in C^{2}(\mathbb{R}^m)$ be a test function such that, for some $\delta \in (0,1)$, the function $u(\bar{t}, \bar{x}, \cdot) - \phi(\cdot)$ attains a strict local maximum at $\bar{y}$ in the ball $\bar{B}_{\delta}(\bar{y})$, and $\phi$ satisfies $1 \leq \phi < K'$ in $\bar{B}_{\delta}(\bar{y})$ for some constant $K' > 1$.
Similar with \cite[Chapter II, Lemma 1.17]{MC97}), we introduce the test function $\phi^{\varepsilon}(t,x,y) = \phi(y)\left(1 + \frac{|x-\Bar{x}|^2 + |t-\Bar{t}|^2}{\varepsilon}\right)$ for every $\varepsilon >0$. Since the relaxed semi-limit $\bar{u}$ has bound \eqref{boundu}, $\Bar{u}-\phi_{\varepsilon}$ has the local maximum point $(t_{\varepsilon}, x_{\varepsilon}, y_{\varepsilon}) \in\Bar{B}_{\delta}(\Bar{t}, \Bar{x}, \Bar{y})$, so that
\begin{align}\label{epsEc}
    \Bar{u}(t_{\varepsilon}, x_{\varepsilon}, y_{\varepsilon}) - \phi^{\varepsilon}(t_{\varepsilon}, x_{\varepsilon}, y_{\varepsilon}) = & \Bar{u}(t_{\varepsilon}, x_{\varepsilon}, y_{\varepsilon}) - \phi(y_{\varepsilon})\left(1 + \frac{|x_{\varepsilon}-\Bar{x}|^2 + |t_{\varepsilon}-\Bar{t}|^2}{\varepsilon}\right) \nonumber \\
    \geq & \Bar{u}(\bar{t}, \bar{x}, \bar{y}) - \phi(\bar{y}).
\end{align}
Since $K' > \phi \geq 1$ in $\Bar{B}_{\delta}(\Bar{y})$, we have
\begin{equation*}
     \frac{|x_{\varepsilon}-\Bar{x}|^2 + |t_{\varepsilon}-\Bar{t}|^2}{\varepsilon} \leq   \Bar{u}(t_{\varepsilon}, x_{\varepsilon}, y_{\varepsilon}) - \Bar{u}(\bar{t}, \bar{x}, \bar{y}) + \phi(\bar{y})< C
\end{equation*}
for some constant $C>0$ and $(t,x) \in \bar{B}_{\delta}(\bar{t},\bar{x})$.
Therefore, there exists a convergence subsequences $$\{(t_{\varepsilon_{n}},x_{\varepsilon_{n}}, y_{\varepsilon_{n}})\}_{n=1}^{\infty} \subset \Bar{B}_{\delta}(\bar{t}, \bar{x}, \bar{y}),$$ so that
\begin{align*}
    & (t_{\varepsilon_{n}},x_{\varepsilon_{n}}, y_{\varepsilon_{n}}) \to  (t_{1}, x_{1}, y_{1}) \in \Bar{B}_{\delta}(\bar{t}, \bar{x}, \bar{y}), \\
    & M_{n}:=\left(1 + \frac{|x_{\varepsilon_{n}}-\Bar{x}|^2 + |t_{\varepsilon_{n}}-\Bar{t}|^2}{\varepsilon_{n}}\right) \to M \geq 1.
\end{align*}
Then letting $\varepsilon_{n} \to 0$ in \eqref{epsEc}, we have
\begin{equation*}
    \Bar{u}(t_{1}, x_{1}, y_{1}) - \phi(y_{1}) \geq \Bar{u}(t_{1}, x_{1}, y_{1}) - M\phi(y_{1}) \geq \Bar{u}(\bar{t},\bar{x},\bar{y}) - \phi(\bar{y}) .
\end{equation*}
Since $u(\Bar{t},\Bar{x},\cdot) - \phi(\cdot) $ has a strict local maximum at $\Bar{y}$ in $B_{\delta}(\Bar{y})$, and $\phi \geq 1$ in $\Bar{B}_{\delta}(\Bar{y})$, we get $(t_{1},x_{1}, y_{1}) = (\bar{t}, \bar{x}, \bar{y})$, and
\begin{equation*}
    (t_{\varepsilon_{n}},x_{\varepsilon_{n}}, y_{\varepsilon_{n}}) \to  (\bar{t}, \bar{x}, \bar{y})\ \text{as} \ \varepsilon_{n} \to 0.
\end{equation*}
Since $u^{\varepsilon}$ is a subsolution to (\ref{HJBu}), we have
\begin{align*}
     & \partial_t \phi^{\varepsilon_{n}}(t_{\varepsilon_{n}},x_{\varepsilon_{n}},y_{\varepsilon_{n}}) +  \frac{1}{\varepsilon_{n}}\left[I_{y}^{\delta}[\phi_{\varepsilon_{n}}](x_{\varepsilon_{n}},y_{\varepsilon_{n}})+ I_{y}^{\delta,c}[u^{\varepsilon_{n}}](x_{\varepsilon_{n}},y_{\varepsilon_{n}})+ \nabla_{y} \phi_{\varepsilon_{n}}(x_{\varepsilon_{n}},y_{\varepsilon_{n}})\right] \nonumber \\
     & \quad  +I_{x}^{\delta}[\phi_{\varepsilon_{n}}](x_{n},y_{n})+ I_{x}^{\delta,c}[u^{\varepsilon_{n}}](x_{\varepsilon_{n}},y_{\varepsilon_{n}}) + H\left(x_{\varepsilon_{n}},y_{\varepsilon_{n}}\nabla_{x} \phi_{\varepsilon_{n}} \right) - \lambda u^{\varepsilon_{n}}(t_{\varepsilon_{n}},x_{\varepsilon_{n}},y_{\varepsilon_{n}}) \leq 0
\end{align*}
It follows that
\begin{align*}
     & I_{y}^{\delta}[\phi_{\varepsilon_{n}}](x_{\varepsilon_{n}},y_{\varepsilon_{n}})+ I_{y}^{\delta,c}[u^{\varepsilon_{n}}](x_{\varepsilon_{n}},y_{\varepsilon_{n}})+ c(x_{\varepsilon_{n}},y_{\varepsilon_{n}}) \cdot \nabla_{y} \phi_{\varepsilon_{n}}(x_{\varepsilon_{n}},y_{\varepsilon_{n}}) \nonumber \\
    \leq & \varepsilon_{n} \big[-\partial_t \phi^{\varepsilon_{n}}(t_{\varepsilon_n},x_{\varepsilon_{n}},y_{\varepsilon_{n}}) - I_{x}^{\delta}[\phi_{\varepsilon_{n}}](x_{\varepsilon_{n}},y_{\varepsilon_{n}})- I_{x}^{\delta,c}[u^{\varepsilon_{n}}](t_{\varepsilon_{n}},x_{\varepsilon_{n}},y_{\varepsilon_{n}}) \nonumber\\
    & \quad \quad   -H\left(x_{\varepsilon_{n}},y_{\varepsilon_{n}},\nabla_{x} \phi_{\varepsilon_{n}}\right) + \lambda u^{\varepsilon_{n}}(t_{\varepsilon_{n}},x_{\varepsilon_{n}},y_{\varepsilon_{n}}) \big] .
\end{align*}
Note that the term in square bracket is uniformly bounded for $\varepsilon_n$. Letting $\varepsilon_{n} \to 0$, we get
\begin{equation}
    I^{\delta}_{y}[\phi](\Bar{t},\Bar{x},\Bar{y}) + I^{\delta,c}_{y}[\Bar{u}](\Bar{t},\Bar{x},\Bar{y}) + c(\Bar{x},\Bar{y})\cdot\nabla \phi(\Bar{y}) \leq  0.
\end{equation}
Thus $\Bar{u}$ is a viscosity subsolution to (\ref{NonlocalL2}). Since $\Bar{u}$ is bounded in $y$ according to estimate \eqref{boundu}, by Liouville property (Lemma \ref{LiouP}), $\Bar{u}$ do not depend on $y$.

\textbf{Step 3} ($\Bar{u}$ and $\underline{u}$ are subsolution and supersolution of the limit PDE). We claim that $\Bar{u}$ and $\underline{u}$ are sub- and supersolutions of the effective HJB equation (\ref{0EHJBu}) on $(0,T)\times \mathbb{R}^n$. We only show that $\Bar{u}$ is a subsolution to (\ref{0EHJBu}) by the perturbed test function method and contradiction argument.
The proof that $\underline{u}$ is a supersolution is analogous.

For every fixed $\Bar{t},\Bar{x} \in (0,T)\times\mathbb{R}^n$, assume that there exists a test function $\varphi \in C^{2}((0,T)\times \mathbb{R}^n)$ so that $\Bar{u} - \varphi$ has a global maximum point $(\Bar{t},\Bar{x})\in (0,T)\times \mathbb{R}^n$, $\Bar{u}(\Bar{t},\Bar{x})= \varphi(\Bar{t},\Bar{x})$, and
\begin{align}
   \partial_{t}\varphi (\Bar{t},\Bar{x})  -(-\Delta)^{\alpha_{1}/2} \varphi (\Bar{t},\Bar{x})+ \Bar{H}\left(\Bar{x},\nabla_{x} \varphi (\Bar{t},\Bar{x}) \right) - \lambda \Bar{u}(\Bar{t},\Bar{x}) < -4\gamma
\end{align}
for some small constant $\gamma >0$. By continuity of $H$ and $\Bar{H}$, and regularity of $\varphi$, we can choose $r > 0$ small enough, so that for every $(t,x) \in B_{r}(\Bar{t},\Bar{x})$ and $y\in \mathbb{R}^m$,
\begin{equation}\label{Ec41}
    \partial_{t}\varphi (t,x) -(-\Delta)^{\alpha_{1}/2} \varphi (t,x)+ \Bar{H}\left(x,\nabla_{x} \varphi (t,x) \right) - \lambda \varphi (t,x) < -3\gamma,
\end{equation}
and
\begin{equation}\label{Ec42}
    |\Bar{H}\left(x,\nabla_{x} \varphi (t,x) \right) -\Bar{H}\left(x,\nabla_{x} \varphi (\Bar{t},\Bar{x}) \right)|+ |H(x,y,\nabla_{x} \varphi (t,x)) - H(\Bar{x},y,\nabla_{x} \varphi (\Bar{t},\Bar{x}))| < \gamma.
\end{equation}
Now we denote $\Bar{p} = \nabla_{x} \varphi (\Bar{t},\Bar{x})$ and introduce the perturbed test function
\begin{equation}
    \varphi^{\varepsilon}(t,x,y):= \varphi(t,x) - \varepsilon w_{\varepsilon}(y),
\end{equation}
where $w_{\varepsilon}$ is the approximate corrector which is given by the $\varepsilon$-cell problem (\ref{cell}). By Lemma \ref{cellL} and the H\"{o}lder continuity of $H$ and $\varphi$, for every fixed $R>0$ we can choose $\varepsilon$ small enough, so that
\begin{equation}\label{Ec43}
    |\varepsilon w_{\varepsilon}(y) + \Bar{H}(\Bar{x},\Bar{p})| < \gamma, \quad \forall y\in B_{R}(0).
\end{equation}
Then combining with (\ref{Ec41}),(\ref{Ec42}), and (\ref{Ec43}), we obtain that for every $(t,x,y)\in B_{r}(\Bar{t},\Bar{x})\times B_{R}(\Bar{y})$,
\begin{equation}
\begin{aligned}
    & \partial_t \varphi^{\varepsilon}(t,x,y) -(-\Delta_{x})^{\alpha_{1}/2}\varphi^{\varepsilon}(t,x,y) + \frac{1}{\varepsilon}\mathcal{L}_{2}\varphi^{\varepsilon}(t,x,y) + H(x,y,\nabla_{x}\varphi^{\varepsilon}(t,x,y)) - \lambda \varphi^{\varepsilon} (t,x,y) \nonumber \\
    = & \partial_{t}\varphi (t,x) - (-\Delta)^{\alpha_1/2} \varphi (t,x)  - H\left(\bar{x},y,\bar{p}\right)  -  \varepsilon w_{\varepsilon}(y)+H(x,y,\nabla_{x}\varphi(t,x)) - \lambda \varphi^{\varepsilon} (t,x,y) \nonumber\\
    \leq & \partial_{t}\varphi (t,x) - (-\Delta)^{\alpha/2} \varphi (t,x) + \Bar{H}\left(x,\nabla_{x} \varphi (t,x) \right)  - \lambda \varphi (t,x) + |\Bar{H}\left(\bar{x},\Bar{p} \right) + \varepsilon w_{\varepsilon}(y)| \nonumber \\
     & \quad  + |H(x,y,\nabla_{x} \varphi (t,x)) - H(\Bar{x},y,\bar{p})| +|\Bar{H}\left(x,\nabla_{x} \varphi (t,x) \right) -\bar{H}\left(\bar{x},\bar{p} \right)| +  \lambda \varepsilon w_{\varepsilon}(y) \nonumber \\
    < &  -\gamma + |\lambda \varepsilon w_{\varepsilon}(y)|.
\end{aligned}
\end{equation}
Now we choose $R,\varepsilon >0$ small enough, so that $|\lambda \varepsilon w_{\varepsilon}(y)| < \gamma$ for every $y\in B_{R}(\Bar{y})$.
Then for every $(t,x,y)\in B_{r}(\Bar{t},\Bar{x})\times B_{R}(\Bar{y})$ and $\varepsilon $ small enough, the test function $\varphi^{\varepsilon}$ satisfies
\begin{equation*}
     \partial_t \varphi^{\varepsilon} -(-\Delta_{x})^{\alpha_{1}/2}\varphi^{\varepsilon} + \frac{1}{\varepsilon}\mathcal{L}_{2}\varphi^{\varepsilon} + H(x,y,\nabla_{x}\varphi^{\varepsilon}) - \lambda \varphi^{\varepsilon} < 0.
\end{equation*}
Since $H(x,y,p_1) + H(x,y,p_2) \geq H(x,y,p_1+p_2)$, $v^{\varepsilon}:=u^{\varepsilon}-\varphi^{\varepsilon}$ is a viscosity subsolution to the HJB equation
\begin{equation*}
    \left\{
 \begin{aligned}
   &  \partial_{t} V -(-\Delta_{x})^{\alpha_{1}/2}V + \frac{1}{\varepsilon}\mathcal{L}_{2}v^{\varepsilon} + H(x,y,\nabla_{x}V) - \lambda V = 0, & \text{in} \  (\bar{t},\bar{t}+r)\times B_{r}(\bar{x}) \times B_{R}(\Bar{y}),\\
   &  V = \bar{M},  & \text{in} \  (\bar{t},\bar{t}+r)\times B^c_{r}(\Bar{x}) \times B^c_{R}(\Bar{y}), \\
   &  V(\bar{t}) = \max_{(x,y)\in (B_{r}(\bar{x})\times B_{R}(\bar{y}))^c}(u^{\varepsilon}(\Bar{t},x,y) -\varphi^{\varepsilon}(\bar{t},x,y)),  & \text{in} \   \mathbb{R}^n\times\mathbb{R}^m,
   \end{aligned}
   \right.
\end{equation*}
where $\bar{M} := \max_{(t,x,y)\in ((\Bar{t},\Bar{t}+r]\times B_{r}(\Bar{x})\times B_{R}(\Bar{y}))^c}(u^{\varepsilon}(t,x,y) -\varphi^{\varepsilon}(t,x,y))$. By strong maximum principle (Proposition \ref{Maxp}), for every $(t,x,y) \in (\bar{t},\bar{t}+r]\times B_{r}(\Bar{x})\times B_{R}(\Bar{y})$ we have
\begin{equation*}
    u^{\varepsilon}(t,x,y)-\varphi^{\varepsilon}(t,x,y) \leq \max_{(t,x,y)\in ([\bar{t},\bar{t}+r]\times B_{r}(\Bar{x})\times B_{R}(\Bar{y}))^c}(u^{\varepsilon}(t,x,y) -\varphi^{\varepsilon}(t,x,y)).
\end{equation*}
Since $[\bar{t},\bar{t}+r/2]\times \Bar{B}_{r/2}(\Bar{x})\times \Bar{B}_{R/2}(\Bar{y})$ is bounded, we can choose a convergent sequences
$$\{(t_{n},x_{n},y_{n})\}_{n \geq 1} \subset [\bar{t},\bar{t}+r/2]\times \Bar{B}_{r/2}(\Bar{x})\times \Bar{B}_{R/2}(\Bar{y}),$$ so that $(t_{n},x_{n},y_{n}) \to (t_{2},x_{2},y_{2}) \in [\Bar{t},\Bar{t}+r/2]\times \Bar{B}_{r/2}(\Bar{x})\times \Bar{B}_{R/2}(\Bar{y})$ as $n\to \infty$. Then we let $n \to \infty$, and obtain
\begin{equation*}
    (\Bar{u}-\varphi)(t_{2},x_{2}) \leq \max_{(B_{r}(\Bar{t})\times B_{R}(\Bar{y}))^c}(\Bar{u}-\varphi),
\end{equation*}
which is contradicts with definition \ref{VSD1}. Thus, $\Bar{u}$ is a subsolution of the effective HJB equation.

\textbf{Step 4} (behavior of $\Bar{u}$ and $\underline{u}$ at time $T$). We show that for every $x\in \mathbb{R}^n$, $\Bar{u}(x,T) \leq \Bar{g}(x)$ and $\underline{u}(x,T) \geq \Bar{g}(x)$ by strong maximum principle. We only show the subsolution $\Bar{u}$.

For every fixed $\Bar{x}\in \mathbb{R}^n$, let $w^{r}_{\Bar{x}}$ be the viscosity solution of the Cauchy problem
\begin{equation}\label{cell2}
 \left\{
 \begin{aligned}
   &  \partial_{t}w^{r}_{\Bar{x}} =  \mathcal{L}_{2}w^{r}_{\Bar{x}}, & \text{in} \  (0,\infty)\times \mathbb{R}^m,\\
   &  w^{r}_{\Bar{x}}(0,y) = \sup_{x\in B_{r}(\Bar{x})}g(x,y),  & \text{in} \   \mathbb{R}^m,
   \end{aligned}
   \right.
\end{equation}
Since $\sup_{x\in B_{r}(\Bar{x})}g(x,y) \to g(x,y)$ as $r \to 0$, the stability of \eqref{cell2} (see e.g. \cite{GC08}) implies that $\lim_{r\to 0}w_{\bar{x}}^{r}(t,y) = w_{\bar{x}}(t,y)$ uniformly on some compact set $K \subset [0,T]\times \mathbb{R}^m$. By Lemma \ref{cellT}, for every $\gamma>0$, there exist $r>0$ small enough, and $t_{0}>0$ and $R>0$ large enough, so that for every $|y| \leq R$, and $t > t_{0}$,
\begin{equation}\label{E:w-g}
    |w^{r}_{\Bar{x}}(t,y) - \bar{g}(\Bar{x})| \leq \gamma.
\end{equation}
By regularity of $u^{\varepsilon}$, there exists a positive constant $M$ so that $u^{\varepsilon}(t,x,y) \leq M$ for every $t> t_0$ $\varepsilon \in (0,1)$, $y\in \mathbb{R}^m$, and $x\in B_{r}(\Bar{x})$.
Let $ \psi$ be a smooth nonnegative function so that $\psi(\Bar{x}) = 0$, $|-(-\Delta)^{\alpha_{1}/2}\psi|$ is uniformly bounded on $\mathbb{R}^n$, and $\psi(x) + \inf_{y\in \mathbb{R}^m}g(x,y) \geq M$ for each $x\in B^{c}_{r}(\Bar{x})$.
Then by estimate \eqref{uppueE}, there exist positive constant $C_{r} > 0$ independent with $\varepsilon$, such that for every $x\in \Bar{B}_{r}(\Bar{x}), y\in \mathbb{R}^m$,
\begin{equation}\label{eq:Cr}
    |-(-\Delta_{x})^{\alpha_{1}/2}\psi(x) +  H(x,y,\nabla\psi(x))  - \lambda u^{\varepsilon}(t,x,y)| < C_{r}.
\end{equation}
Now we introduce the test function
\begin{equation}
    \psi^{\varepsilon}(t,x,y) = w_{\Bar{x}}^{r}\left(\frac{T-t}{\varepsilon}, y \right) + \psi(x) + C_{r}(T-t).
\end{equation}
Then for $(t,x,y)\in (T-r,T)\times B_{r}(\Bar{x}) \times \mathbb{R}^m$, by \eqref{cell2} and \eqref{eq:Cr}, $\psi^{\varepsilon}$ satisfies
\begin{align}\label{Sp1}
    & \partial_{t}\psi^{\varepsilon}  -(-\Delta)^{\alpha/2}_{x}\psi^{\varepsilon} + \frac{1}{\varepsilon}\mathcal{L}_{2}\psi^{\varepsilon} + H(x,y,\nabla_{x}\psi^{\varepsilon}) - \lambda u^{\varepsilon} \nonumber \\
    = & \frac{1}{\varepsilon}\left(-\partial_{t}w^{r}_{\Bar{x}} + \mathcal{L}_{2}w^{r}_{\Bar{x}}\right) - C_{r}  -(-\Delta)^{\alpha/2}_{x}\psi + H(x,y,\nabla_{x}\psi) - \lambda u^{\varepsilon} \nonumber\\
    \leq & 0.
\end{align}
Note that the constant $ \inf_{y\in\mathbb{R}^m}\sup_{x\in B_{r}(\bar{x})}g(x,y)$ is a subsolution of (\ref{cell2}). The strong maximum principle implies that
\begin{equation}
    w^{r}_{\bar{x}}(t,y) \geq \inf_{y\in\mathbb{R}^m}\sup_{x\in B_{r}(\bar{x})}g(x,y), \quad (t,y) \in [0,\infty)\times \mathbb{R}^m.
\end{equation}
Thus for $(t,x,y)\in (T-r,T)\times B^{c}_{r}(\Bar{x}) \times \mathbb{R}^m$, we have
\begin{equation}\label{Sp2}
    \psi^{\varepsilon}(t,x,y) \geq \inf_{y\in\mathbb{R}^m}\sup_{x\in B_{r}(\Bar{x})}g(x,y) + M - \inf_{y\in\mathbb{R}^m}g(x,y) + C_{r}(T-t) \geq M.
\end{equation}
Moreover, at time $T$ we have
\begin{equation}\label{Sp3}
    \psi^{\varepsilon}(T,x,y) \geq \sup_{x\in B_{r}(\Bar{x})}g(x,y) + \psi(x) \geq g(x,y).
\end{equation}
Combining with (\ref{Sp1}),(\ref{Sp2}), and (\ref{Sp3}), we conclude that $\psi^{\varepsilon}$ is a supersolution of the parabolic equation
\begin{equation}\label{eq:TV-HJB}
    \left\{
 \begin{aligned}
   &  \partial_{t} V -(-\Delta_{x})^{\alpha_{1}/2}V + \frac{1}{\varepsilon}\mathcal{L}_{2}V + H(x,y,\nabla_{x}V) - \lambda V = 0, & \text{in} \  (T-r,T)\times B_{r}(\bar{x}) \times \mathbb{R}^m,\\
   &  V(t,x,y) = M,  & \text{in} \  (T-r,T)\times B^c_{r}(\bar{x}) \times \mathbb{R}^m, \\
   &  V(T,x,y) = g(x,y),  & \text{in} \   B_{r}(\bar{x})\times\mathbb{R}^m.
   \end{aligned}
   \right.
\end{equation}
Consider the upper semicontinuous function $U^{\varepsilon}: [T-r,T] \times \mathbb{R}^n\times \mathbb{R}^m \mapsto \mathbb{R}$ so that $U^{\varepsilon} = u^{\varepsilon}$ in $[T-r,T] \times\bar{B}_{r}(\bar{x})\times \mathbb{R}^m$, and $U^{\varepsilon}  \equiv M$ in $[T-r,T] \times\bar{B}^c_{r}(\bar{x})\times \mathbb{R}^m$. Then $U^{\varepsilon}$ is a subsolution to \eqref{eq:TV-HJB}.
Using strong maximum principle to \eqref{eq:TV-HJB}, for every $\varepsilon \in (0,1)$, $(t,x,y) \in [T-r,T] \times\Bar{B}_{r}(\Bar{x})\times \mathbb{R}^m$, we get
\begin{align}\label{Efin}
    u^{\varepsilon}(t,x,y) - \psi^{\varepsilon}(t,x,y) =  U^{\varepsilon}(t,x,y) - \psi^{\varepsilon}(t,x,y) \leq M - \psi^{\varepsilon}(t,x,y) \leq 0.
\end{align}
For every $(t,x,y) \in (T-r,T)\times\Bar{B}_{r}(\Bar{x})\times \mathbb{R}^m$, we consider the convergence sequence $$(t_{\varepsilon},x_{\varepsilon},y_{\varepsilon}) \subset (T-r,T)\times\Bar{B}_{r}(\Bar{x})\times \mathbb{R}^m,$$ so that $(\varepsilon,t_{\varepsilon},x_{\varepsilon},y_{\varepsilon}) \to (0,t,x,y)$ as $\varepsilon \to 0$.
By \eqref{E:w-g}, we can take the upper limit of both sides of \eqref{Efin} as $(\varepsilon,t',x',y') \to (0,t,x,y)$ and obtain
\begin{equation}
    \Bar{u}(t,x) \leq \bar{g}(x) + \gamma + \psi(x) + C_{r}(T-t).
\end{equation}
for every $\gamma >0$, $t\in (T-r,T)$, $x\in B_{r}(\Bar{x})$.
Letting $(t,x) \to (T,\Bar{x})$, since $\gamma >0 $ is arbitrary and $\psi(\bar{x}) = 0$, we have
\begin{equation}
    \Bar{u}(T,\Bar{x}) \leq \Bar{g}(\Bar{x}).
\end{equation}
The proof for $ \underline{u}(T,\Bar{x}) \geq \Bar{g}(\Bar{x})$ is an analogous argument.

\textbf{Step 5} (locally uniformly convergence).
Since $\Bar{u}(T,\Bar{x}) \leq \Bar{g}(\Bar{x}) \leq \underline{u}(T,\Bar{x})$, using comparison principle we have $\Bar{u} \leq \underline{u}$ in $[0,T]\times \mathbb{R}^m$. However, the definition of relaxed semilimits implies that $\Bar{u} \geq \underline{u}$ in $[0,T]\times \mathbb{R}^m$. Thus $\Bar{u} = \underline{u}:= u$ in $[0,T]\times \mathbb{R}^m$. Moreover, by continuity of $u$ and the definition of relaxed semilimits, $u^{\varepsilon}$ converges locally uniformly to $u$ (see e.g. \cite[Lemma 5.1.9]{MC97}).

\end{proof}

\section{Probabilistic method }

In this section, we establish the averaging principle stated in Theorem~\ref{intm} via a probabilistic approach under Assumptions $(\bf{A^{'}_{b}})$, $(\bf{A_{c}})$, $(\bf{A^{'}_{L}})$, $(\bf{A^{'}_{g}})$ and ($\bf{A_{b,L}}$). More precisely, we demonstrate that the value function of the original multiscale stochastic control system converges to that of the effective reduced system.

\subsection{ Averaging principle of the
controlled jump diffusion }

To prove the averaging principle, we give the following exponential ergodicity for equation \eqref{0nsde}, inspired by \cite[Proposition 1]{YZ24}. The detailed proof is omitted.

\begin{prop}
\label{ergodic11}
Under Hypothesis ($\bf{A_{c}}$), for each function $\widetilde{\varphi}(y) \in C^{1}_{b}$, there exist positive constants $C$ and $\gamma$ such that for all $s \geq 0$ and $x^{\prime} \in \mathbb{R}^n$, we have
\begin{equation*}
\sup_{x^{\prime} \in \mathbb{R}^{n}}|P^{x^{\prime}}_s\widetilde{\varphi}(y)-\mu^{x^{\prime}}(\widetilde{\varphi})|\leq C\|\widetilde{\varphi}\|_{1}e^{-\frac{\beta s}{8}}(1+|y|),
\end{equation*}
where $P^{x^{\prime}}_s\widetilde{\varphi}(y)=\mathbb{E}\widetilde{\varphi}(Y^{x^{\prime},y}_{s})$, and $\mu^{x'}(\widetilde{\varphi}) = \int_{\mathbb{R}^m} \widetilde{\varphi}(y) \mu^{x'}(dy)$.
\end{prop}

In the following, we show that as the scale parameter $\varepsilon \to 0$, the slow component $X^{\varepsilon}$ of the original system strongly converges to the averaged system $\Bar{X}$ in $L^{p}$ sense.  To prove Theorem \ref{intm}, we need to deduce the following moment properties for fast component $Y^{\varepsilon}_{s}$ and slow component $X^{\varepsilon}_{s}$.
\begin{lem}
\label{corollary1}
Suppose ($\bf{A_{b}}$)  hold. Then for every $1 \leq p < \alpha_2$, it holds that
\begin{equation*}
\mathbb{E}\left(\sup_{t \leq s \leq T}\left|Y^{\varepsilon}_{s}\right|^p\right)\leq C(p) \left( T^{\frac{p}{\alpha_2}}\vee T^{1-\frac{1}{\alpha_2}+\frac{p}{\alpha_2}}\right) \varepsilon^{-\frac{p}{\alpha_2}} +|y|^p.
\end{equation*}
This implies that
\begin{equation*}
\varepsilon^{p} \mathbb{E}\left(\sup_{t\leq s \leq T}\left|Y^{\varepsilon}_{s}\right|^p\right)\rightarrow 0, \quad \text{as } \varepsilon \rightarrow 0.
\end{equation*}
\end{lem}
\begin{proof}
Note that for each $\varepsilon >0$, we have
\begin{equation*}
\begin{aligned}
Y^{\varepsilon}_{s\varepsilon} &=y+\frac{1}{\varepsilon}\int_{t}^{s\varepsilon}c({X^{\varepsilon}_r}, Y^{\varepsilon}_r)dr + \frac{1}{\varepsilon^{\frac{1}{\alpha_2}}}L_{s\varepsilon}^{{\alpha_2}}
=y+\int^{s}_{\frac{t}{\varepsilon}}c({X^{\varepsilon}_{r\varepsilon}}, Y^{\varepsilon}_{r\varepsilon})ds+\widetilde{L}^{\alpha_2}_{s},
\end{aligned}
\end{equation*}
where $\{\widetilde{L}^{\alpha_2}_{s}=\varepsilon^{-\frac{1}{\alpha_2}}L_{s\varepsilon}^{{\alpha_2}}, s \geq 0\}$ is an $\alpha$-stable process with the same law as $L^{\alpha_2}_s$.
Using the condition $\sup_{y \in \mathbb{R}^m} |c(0,y)|< \infty$ and the same argument as Lemma 6 in \cite{YZ24}, we can obtain that
\begin{equation*}
\mathbb{E}\left(\sup_{t \leq s \leq T} |Y^{\varepsilon}_{s\varepsilon} |^{p}\right) \leq C(p)\left(T^{\frac{p}{\alpha_2}}\vee T^{1-\frac{1}{\alpha_2}+\frac{p}{\alpha_2}}\right) +|y|^p.
\end{equation*}
Therefore,
\begin{equation*}
  \begin{split}
    \mathbb{E}\left(\sup_{s \in[t,T]}|Y^{\varepsilon}_s|^p\right) &= \mathbb{E}\left(\sup_{\frac{t}{\varepsilon} \leq s \leq \frac{T}{\varepsilon}}|Y^{\varepsilon}_{s\varepsilon} |^{p}\right)\leq C(p) \left( \left(\frac{T}{\varepsilon}\right)^{\frac{p}{\alpha_2}}\vee \left(\frac{T}{\varepsilon}\right)^{1-\frac{1}{\alpha_2}+\frac{p}{\alpha_2}}\right) +|y|^p \\
    &\le C(p) \left( T^{\frac{p}{\alpha_2}}\vee T^{1-\frac{1}{\alpha_2}+\frac{p}{\alpha_2}}\right) \varepsilon^{-\frac{p}{\alpha_2}} +|y|^p.
  \end{split}
\end{equation*}
\end{proof}

By employing a technique similar to that in \cite[Lemma 4]{YZ24}, we obtain


\begin{lem}
\label{lemma-3}
Under Hypothesis ($\bf{A_{c}}$),  for all $t\geq 0$, and $x_i \in \mathbb{R}^n$, $y_i \in \mathbb{R}^m$, $i=1,2$, we have
\begin{equation}
\begin{aligned}
|\nabla_{x}Y^{x_1,y_1}_s-\nabla_{x}Y^{x_2,y_2}_s|^2  &\leq C(\|c\|_2,\beta) s e^{-\frac{\beta}{2}s}|y_1-y_2|^2+C(\|c\|_2,\beta)|x_1-x_2|^2, \\
|\nabla_{y}Y^{x_1,y_1}_s-\nabla_{y}Y^{x_2,y_2}_s|^2 &\leq C(\|c\|_2,\beta) se^{-\frac{\beta s}{2}} \left(|y_1-y_2|^2+|x_1-x_2|^2\right),
\end{aligned}
\end{equation}
where $C(\|c\|_2,\beta)$ is a constant independent of $s$.
\end{lem}

Next, we will use the technique of nonlocal Poisson equation to prove Theorem \ref{intm}.
\begin{lem}
\label{lem3.2}
Suppose that the assumptions in Theorem \ref{intm} hold. Define
\begin{equation}
\label{Phi}
\Phi(x,y,v):=\int^{\infty}_{0}\left[\mathbb{E}b(x,Y^{x,y}_s,v)-\bar{b}(x,v)\right]ds.
\end{equation}
and
\begin{equation}
\label{poisson}
\mathcal{L}_2(x,y)\Phi(x,y,v)=-b(x,y,v)+\bar{b}(x,v).
\end{equation}

Then $\Phi(x, y, v)$ is a solution of the non-autonomous Poisson equation \eqref{poisson}. Moreover, we have  \\
(i)
\begin{equation}
\label{i}
\begin{aligned}
\sup_{v\in U}\sup_{x\in \mathbb{R}^n}\left|\Phi(x,y,v)\right| &\leq C(\|\nabla_y b\|_{0}, \beta)(1+|y|); \\
\end{aligned}
\end{equation}
(ii)
\begin{equation}
\begin{aligned}
\sup_{v\in U}\sup_{y\in \mathbb{R}^n}\left\|\nabla_y\Phi(x,y,v)\right\| &\leq C(\|\nabla_y b\|_{0}, \beta),~~ \sup_{v\in U}\sup_{y\in \mathbb{R}^n}\left\|\nabla^{2}_y\Phi(x,y,v)\right\| & \leq C(\mathcal{P}); \\
\end{aligned}
\end{equation}
(iii)
\begin{equation}
\label{iii}
\begin{aligned}
\sup_{v\in U}\sup_{x\in \mathbb{R}^n}\left\|\nabla_x\Phi(x,y,v)\right\| &\leq C(\mathcal{P})(1+|y|^{\frac{1}{2}});
\end{aligned}
\end{equation}
(iv)
\begin{equation}
\label{iv}
\begin{aligned}
\sup_{v\in U}\sup_{x\in \mathbb{R}^n}\left\|\nabla^2_x\Phi(x,y,v)\right\| &\leq C(\mathcal{P})(1+|y|);
\end{aligned}
\end{equation}
(v)
\begin{equation}
\label{gradient}
\begin{aligned}
\sup_{v\in U}\left\|\nabla_v\Phi(x,y,v)\right\| &\leq C(\|\nabla_y \nabla_v b\|_{0}, \beta).
\end{aligned}
\end{equation}
\end{lem}

\begin{proof}
Set $\Re(x,Y^{x,y}_s,v):=b(x,Y^{x,y}_s,v)-\bar{b}(x,v)$. Then $\Re$ satisfies the following centering condition, i.e.,
\begin{equation*}
\label{equa}
\int_{\mathbb{R}^m}\Re(x,y,v)\mu_{x}(dy)=0.
\end{equation*}
It is well known that $P^{x}_{s}b(x, \cdot, v)(y):=\mathbb{E}b(x,Y^{x,y}_s,v)$ satisfies the backward Fokker-Planck equation
\begin{equation}
\frac{d}{ds}P^{x}_{s}b(x, \cdot, v)(y)=\mathcal{L}_2(x,y)\left[P^{x}_{s}b(x, \cdot, v)(y)\right].
\end{equation}

By combining the definition of $\mathcal{L}_2$ with equation \eqref{equa}, we obtain
\begin{equation}
\begin{aligned}
\mathcal{L}_2(x,y)\Phi(x, y, v)&=\mathcal{L}_2(x,y)\left[\int^{\infty}_{0}\mathbb{E}\Re(x, \cdot, v)(y)ds\right] \\
&=\int^{\infty}_{0}\mathcal{L}_2(x,y)\left[P^{x}_{s}b(x, \cdot, v)(y)\right]ds \\
&=\int^{\infty}_{0}\frac{d}{ds}P^{x}_{s}b(x, \cdot, v)(y)ds\\
&=-b(x,y,v)+\bar{b}(x,v).
\end{aligned}
\end{equation}
Below, we mainly focus on the estimates \eqref{i}-\eqref{gradient}.

(i) By Proposition \ref{ergodic11}, we have
\begin{equation}
\begin{aligned}
\sup_{v\in U}\sup_{x\in \mathbb{R}^n}\left|\Phi(x,y,v)\right| &\leq  \sup_{v\in U}\sup_{x\in \mathbb{R}^n}\int^{\infty}_0\left|\mathbb{E}b(x,Y^{x,y}_s,v)-\bar{b}(x,v)\right|ds\\
&\leq \sup_{v\in U}\sup_{x\in \mathbb{R}^n} \|\nabla_y b\|_{0}\cdot \int^{\infty}_0e^{-\frac{\beta r}{8}}(1+|y|)dr, \\
& \leq C(\|\nabla_y b\|_{0}, \beta)(1+|y|).
\end{aligned}
\end{equation}
(ii) By the equality \eqref{Phi} and inequality \eqref{dY}, we obtain
\begin{equation}
\begin{aligned}
\sup_{v\in U}\sup_{x\in \mathbb{R}^n}\left\|\nabla_y\Phi(x,y,v)\right\| &= \sup_{v\in U}\sup_{x\in \mathbb{R}^n}\int^{\infty}_0 \left|\mathbb{E}\left[\nabla_y b(x,Y^{x,y}_s,v)\cdot \nabla_yY^{x,y}_s\right]\right|ds \\
& \leq \sup_{v\in U}\sup_{x\in \mathbb{R}^n}\int^{\infty}_0 \|\nabla_y b\|_{0}e^{-\frac{\beta s}{2}} ds \\
& \leq C(\|\nabla_y b\|_{0}, \beta).
\end{aligned}
\end{equation}

Similarly, using equality \eqref{Phi} together with inequality \eqref{dY}, we derive
\begin{equation}
\begin{aligned}
\sup_{v\in U}\sup_{x\in \mathbb{R}^n}\left\|\nabla^{2}_y\Phi(x,y,v)\right\|
&= \sup_{v\in U}\sup_{x\in \mathbb{R}^n}\int^{\infty}_0 \left|\mathbb{E}\left[\nabla^{2}_y b(x,Y^{x,y}_s,v)\cdot \nabla_yY^{x,y}_s\right]\right|ds \\
&+\sup_{v\in U}\sup_{x\in \mathbb{R}^n}\int^{\infty}_0 \left|\mathbb{E}\left[\nabla_y b(x,Y^{x,y}_s,v)\cdot \nabla^{2}_yY^{x,y}_s\right]\right|ds        \\
& \leq \sup_{v\in U}\sup_{x\in \mathbb{R}^n}\int^{\infty}_0 \|\nabla^2_y b\|_{0}e^{-\frac{\beta s}{2}} ds
+\sup_{v\in U}\sup_{x\in \mathbb{R}^n}C(\|c\|_2,\beta,\|\nabla_y b\|_{0})\int^{\infty}_0 \sqrt{s} e^{-\frac{\beta s}{4}} ds \\
& \leq C(\mathcal{P}).
\end{aligned}
\end{equation}

(iii) Let us define
\begin{equation*}
\widetilde{b}(s,s_0,x,y,v):=\mathbb{E}b(x,Y^{x,y}_s,v)-\mathbb{E}b(x,Y^{x,y}_{s+s_0},v)
\end{equation*}
and
\begin{equation*}
\hat{b}(s, x,y,v):=\mathbb{E}b(x,Y^{x,y}_s,v).
\end{equation*}

Then, by Proposition \ref{ergodic11} and the Markov property, it follows that
\begin{equation}
\begin{aligned}
\widetilde{{b}}(s,\infty,x,y,v)&=\hat{b}(s, x,y,v)-\bar{b}(s, x,v), \\
\end{aligned}
\end{equation}
and
\begin{equation}
\widetilde{b}(s, s_0,x,y,v)=\hat{b}(s, x,y,v)-\mathbb{E}\hat{b}(s, x,Y^{x,y}_{s_0},v).
\end{equation}
Thus, we have
\begin{equation}
\label{nablaxx}
\begin{aligned}
\nabla_x \widetilde{b}(s, s_0,x,y,v)&=\nabla_x\hat{b}(s,x,y,v)-\mathbb{E}\left[\nabla_x\hat{b}(s,x,Y^{x,y}_{s_0},v)\right]
-\mathbb{E}\left[\nabla_y\hat{b}(s, x,Y^{x,y}_{s_0},v)\cdot \nabla_x Y^{x,y}_{s_0}\right].
\end{aligned}
\end{equation}

By Proposition \ref{ESYxy} and Lemma \ref{lemma-3}, we have
\begin{equation}
\label{nablax}
\begin{aligned}
\sup_{x\in \mathbb{R}^n, y\in \mathbb{R}^{m}}\|\nabla_x Y^{x,y}_s\| &\leq C(\|c\|_1,\beta), ~~~~~~~
\sup_{x\in \mathbb{R}^n, y\in \mathbb{R}^{m}}\|\nabla_y Y^{x,y}_s\| \leq e^{-\frac{\beta}{4}s},\\
\sup_{x\in \mathbb{R}^n, y\in \mathbb{R}^{m}}\|\nabla_x\nabla_y Y^{x,y}_s\| &\leq C(\|c\|_2,\beta) \sqrt{s}e^{-\frac{\beta s}{4}},~~ \sup_{x\in \mathbb{R}^n, y\in \mathbb{R}^{m}}\|\nabla^2_x Y^{x,y}_s\| \leq C(\|c\|_2,\beta), \\
\sup_{x\in \mathbb{R}^n, y\in \mathbb{R}^{m}}\|\nabla^2_y Y^{x,y}_s\| &\leq C(\|c\|_2,\beta) \sqrt{s}e^{-\frac{\beta s}{4}}.
\end{aligned}
\end{equation}

Note that
\begin{equation}
\label{results}
\begin{aligned}
\nabla_x\hat{b}(s,x,y,v)&=\mathbb{E}\left[\nabla_xb(x,Y^{x,y}_s,v)+\nabla_yb(x,Y^{x,y}_s,v)\cdot \nabla_x Y^{x,y}_s\right],\\
\nabla_y\hat{b}(s,x,y,v)&=\mathbb{E}\left[\nabla_yb\left(x,Y^{x,y}_s,v\right)\cdot \nabla_y Y^{x,y}_s\right],\\
\nabla^2_x\hat{b}(s,x,y,v)&=\mathbb{E}\left[\nabla^2_xb(x,Y^{x,y}_s,v)\right]
+2\mathbb{E}\left[\nabla_x\nabla_yb(x,Y^{x,y}_s,v)\cdot \nabla_x Y^{x,y}_s\right]\\&+\mathbb{E}\left[\nabla^2_yb(x,Y^{x,y}_s,v)\cdot \left(\nabla_x Y^{x,y}_s\right)^2\right]+\mathbb{E}\left[\nabla_yb(x,Y^{x,y}_s,v)\cdot \nabla^2_x Y^{x,y}_s\right],\\
\nabla_y\nabla_x\hat{b}(s,x,y,v)&=\mathbb{E}\left[\nabla_y\nabla_xb(x,Y^{x,y}_s,v)\cdot \nabla_y Y^{x,y}_s+\nabla^2_yb(x,Y^{x,y}_s,v)\cdot \nabla_y Y^{x,y}_s\cdot \nabla_x Y^{x,y}_s\right],\\
&+\mathbb{E}\left[\nabla_yb(x,Y^{x,y}_s,v)\cdot \nabla_y \nabla_x Y^{x,y}_s\right], \\
\nabla^2_y\hat{b}(s,x,y,v)&=\mathbb{E}\left[\nabla^{2}_yb\left(x,Y^{x,y}_s,v\right)\cdot \nabla_y Y^{x,y}_s\right]
+\mathbb{E}\left[\nabla_yb\left(x,Y^{x,y}_s,v\right)\cdot \nabla^{2}_y Y^{x,y}_s\right].
\end{aligned}
\end{equation}
Therefore by the assumption $b$, inequality \eqref{nablax} and equality \eqref{results}, we have
\begin{equation}
\label{result}
\begin{aligned}
\|\nabla_y\hat{b}(s,x,y,v)\|_{0}&\leq \|\nabla_y{b}(x,y,v)\|_{0}\cdot e^{-\frac{\beta}{4}s},\\
\|\nabla_x\nabla_y\hat{b}(s,x,y,v)\|_{0}& \leq \|\nabla_y\nabla_x{b}(x,y,v)\|_{0}\cdot e^{-\frac{\beta}{4}s}+
\|\nabla^2_y{b}(x,y,v)\|_{0}\cdot C(\|c\|_1,\beta)e^{-\frac{\beta s}{4}}\\
&+\|\nabla_y{b}(x,y,v)\|_{0}\cdot C(\|c\|_2,\beta)\sqrt{s}e^{-\frac{\beta s}{4}},\\
\|\nabla^2_y\hat{b}(s,x,y,v)\|_{0}& \leq \|\nabla^2_y{b}(x,y,v)\|_{0}\cdot e^{-\frac{\beta s}{4}}+ \|\nabla_y{b}(x,y,v)\|_{0}\cdot C(\|c\|_2,\beta)\sqrt{s}e^{-\frac{\beta s}{4}},
\end{aligned}
\end{equation}
and
\begin{equation}
\label{0nabla}
\begin{aligned}
\|\nabla_x \hat{b}(s,x,y_1,v)-\nabla_x \hat{b}(s,x,y_2,v)\|&\leq \left\|\mathbb{E}\left[\nabla_xb(x,Y^{x,y_1}_s,v)
-\nabla_xb(x,Y^{x,y_2}_s,v)\right]\right\|\\
&+ \left\|\mathbb{E}\left[\left(\nabla_yb(x,Y^{x,y_1}_s,v)
-\nabla_yb(x,Y^{x,y_2}_s,v)\right)\cdot \nabla_x Y^{x,y_1}_s\right]\right\|\\
&+\left\|\mathbb{E}\left[\nabla_yb(x,Y^{x,y_2}_s,v)\cdot
\left(\nabla_x Y^{x,y_1}_s-\nabla_x Y^{x,y_2}_s\right)\right]\right\|\\
& \leq C(\|\nabla_x b(x,y,v)\|_0, \|\nabla_y\nabla_x b(x,y,v)\|_0) \mathbb{E}\left[|Y^{x,y_1}_s-Y^{x,y_2}_s|^{\frac{1}{2}}\right]\\
&+C(\|c\|_1,\beta,\|\nabla^2_y b(x,y,v)\|_0) \mathbb{E}\left[|Y^{x,y_1}_s-Y^{x,y_2}_s|^{\frac{1}{2}}\right]\\
&+ \|\nabla_y b(x,y,v)\|_0 \cdot C(\|c\|_2,\beta) \sqrt{s}e^{-\frac{\beta s}{4}}\\
& \leq C(\|\nabla_x b(x,y,v)\|_0, \|\nabla_y\nabla_x b(x,y,v)\|_0) e^{-\frac{\beta}{4}s}|y_1-y_2|^{\frac{1}{2}}\\
&+C(\|c\|_1,\beta,\|\nabla^2_y b(x,y,v)\|_0  ) \cdot  e^{-\frac{\beta}{4}s}|y_1-y_2|^{\frac{1}{2}} \\
&+C(\|c\|_2,\beta, \|\nabla_y b(x,y,v)\|_0) \sqrt{s}e^{-\frac{\beta s}{4}}|y_1-y_2|^{\frac{1}{2}}.
\end{aligned}
\end{equation}

Taking the inequalities \eqref{0nabla} and \eqref{nablax} in \eqref{nablaxx},  we have
\begin{equation}
\begin{aligned}
&\sup_{v\in U}\sup_{x\in \mathbb{R}^n}\left\|\nabla_x\Phi(x,y,v)\right\|= \sup_{v\in U}\sup_{x\in \mathbb{R}^n}\left\|\int^{\infty}_{0}\nabla_x\widetilde{{b}}(s,\infty,x,y,v)ds\right\|\\
&=\sup_{v\in U}\sup_{x\in \mathbb{R}^n}\left\|\int^{\infty}_{0}\lim_{s_0\rightarrow \infty}\nabla_x\widetilde{{b}}(s,s_0,x,y,v)ds\right\|\\
&=\sup_{v\in U}\sup_{x\in \mathbb{R}^n}\left\|\int^{\infty}_{0}\lim_{s_0\rightarrow \infty}\left\{\nabla_x\hat{b}(s,x,y,v)-\mathbb{E}\left[\nabla_x\hat {b}(s, x,Y^{x,y}_{s_0},v)\right]-\mathbb{E}\left[\nabla_y\hat{b}(s,x,Y^{x,y}_{s_0},v)\cdot \nabla_x Y^{x,y}_{s_0}\right]\right\} ds\right\|\\
& \leq C(\mathcal{P})(1+|y|^{\frac{1}{2}}).
\end{aligned}
\end{equation}
(iv) Recall that
\begin{equation}
\nabla_x\widetilde{b}(s,s_0,x,y,v)=\nabla_x\hat{b}(s,x,y,v)-\mathbb{E}\nabla_x\hat{b}(s,x,Y^{x,y}_{s_0},v)
-\mathbb{E}\left[\nabla_y\hat{b}(s,x,Y^{x,y}_{s_0},v)\cdot \nabla_x Y^{x,y}_{s_0}\right],
\end{equation}
and
\begin{equation}
\begin{aligned}
&\nabla_x\widetilde{b}(s,s_0,x_1,y,v)-\nabla_x\widetilde{b}(s,s_0,x_2,y,v)
=\nabla_x\hat{b}(s,x_1,y,v)-\mathbb{E}\nabla_x\hat{b}(s,x_1,Y^{x_1,y}_{s_0},v_{s^{'}})-\mathbb{E}\left[\nabla_y\hat{b}(s,x_1,Y^{x_1,y}_{s_0},v)\cdot \nabla_x Y^{x_1,y}_{s_0}\right]\\ &-\left\{\nabla_x\hat{b}(s,x_2,y,v)-\mathbb{E}\nabla_x\hat{b}(s,x_2,Y^{x_2,y}_{s_0},v)-\mathbb{E}\left[\nabla_y\hat{b}(s,x_2,Y^{x_2,y}_{s_0},v)\cdot \nabla_x Y^{x_2,y}_{s_0}\right]\right\}\\
&=\left\{\nabla_x\hat{b}(s,x_1,y,v)-\mathbb{E}\nabla_x\hat{b}(s,x_1,Y^{x_1,y}_{s_0},v)
-\left[\nabla_x\hat{b}(s,x_2,y,v)-\mathbb{E}\nabla_x\hat{b}(s,x_2,Y^{x_1,y}_{s_0},v)\right]\right\}\\
&+\left[\mathbb{E}\nabla_x\hat{b}(s,x_2,Y^{x_2,y}_{s_0},v)-\mathbb{E}\nabla_x\hat{b}(s,x_2,Y^{x_1,y}_{s_0},v)\right]\\
&+\left\{\mathbb{E}\left[\nabla_y\hat{b}(s,x_1,Y^{x_1,y}_{s_0},v)\cdot \nabla_x Y^{x_1,y}_{s_0}]-\mathbb{E}[\nabla_y\hat{b}(s,x_2,Y^{x_2,y}_{s_0},v)\cdot \nabla_x Y^{x_2,y}_{s_0}\right]\right\}\\
&=:\Upsilon_1+\Upsilon_2+\Upsilon_3.
\end{aligned}
\end{equation}
For the term $\Upsilon_1$, note that
\begin{equation}
\nabla_x\hat{b}(s,x,y,v)=\mathbb{E}\left[\nabla_xb(x,Y^{x,y}_s,v)+\nabla_yb(x,Y^{x,y}_s,v)\cdot \nabla_x Y^{x,y}_s\right].
\end{equation}
Thus we have
\begin{equation}
\begin{aligned}
&\left\|\nabla_x\hat{b}(s,x_1,y_1,v)-\nabla_x\hat{b}(s,x_1,y_2,v)-\left[\nabla_x\hat{b}(s, x_2,y_1,v)-\nabla_x\hat{b}(s, x_2,y_2,v)\right]\right\|
\\
&\leq \left\|\mathbb{E}\left[\nabla_x{b}(x_1,Y^{x_1,y_1}_s,v)-\nabla_x{b}(x_1,Y^{x_1,y_2}_s,v)-\left(\nabla_x{b}(x_2,Y^{x_1,y_1}_s,v)
-\nabla_x{b}(x_2,Y^{x_1,y_2}_s,v)\right)\right]\right\|\\
&+\left\|\mathbb{E}\left[\nabla_x{b}(x_2,Y^{x_1,y_1}_s,v)-\nabla_x{b}(x_2,Y^{x_2,y_1}_s,v)-\left(\nabla_x{b}(x_2,Y^{x_1,y_2}_s,v)
-\nabla_x{b}(x_2,Y^{x_2,y_2}_s,v)\right)\right]\right\|\\
&+\left\|\mathbb{E}\left[\nabla_y{b}(x_1,Y^{x_1,y_1}_s,v)\cdot\nabla_x Y^{x_1,y_1}_s-\nabla_y{b}(x_2,Y^{x_2,y_1}_s,v)\cdot\nabla_x Y^{x_2,y_1}_s
\right.\right.\\&\left.\left.-\mathbb{E}[\nabla_y{b}(x_1,Y^{x_1,y_2}_s,v)]\cdot\nabla_x Y^{x_1,y_2}_s-\nabla_y{b}(x_2,Y^{x_2,y_2}_s,v)]\cdot\nabla_x Y^{x_2,y_2}_s\right]\right\|.
\end{aligned}
\end{equation}
By the assumption $b$, inequalities \eqref{dY} and \eqref{nablax}, we have
\begin{equation}
\begin{aligned}
&\left\|\nabla_x\hat{b}(s, x_1,y_1,v)-\nabla_x\hat{b}(s, x_1,y_2,v)-\left[\nabla_x\hat{b}(s, x_1,y_1,v)-\nabla_x\hat{b}(s, x_1,y_1,v)\right]\right\|\\
&\leq C(\mathcal{P})\left(\sqrt{s_0}+1\right)e^{-\frac{\beta s_0}{4}}|x_1-x_2||y_1-y_2|
\end{aligned}
\end{equation}
This implies
\begin{equation*}
\begin{aligned}
|\Upsilon_1| &\leq C(\mathcal{P})\cdot
(x_1-x_2)\cdot\left(\sqrt{s_0}+1\right)e^{-\frac{\beta s_0}{4}}\cdot \mathbb{E}|y-Y^{x_1,y}_{s_0}|.
\end{aligned}
\end{equation*}
Applying Lemma \ref{corollary1} with $\varepsilon = 1$, we obtain
\begin{equation}
\label{Upsilon1}
|\Upsilon_1| \leq C(\mathcal{P})\left(\sqrt{s_0}+1\right)e^{-\frac{\beta s_0}{4}}\cdot |x_1-x_2|\cdot (1+|y|).
\end{equation}
For the term $\Upsilon_2$, using Assumption ($\bf{A_{b}}$), equality \eqref{results}, inequalities \eqref{dY} and \eqref{nablax}, we have
\begin{equation}
\begin{aligned}
|\Upsilon_2| & \leq \|\nabla_y\nabla_x\hat{b}(x,y,v)\|_0 \cdot \mathbb{E}\left|Y^{x_1,y}_{s_0}-Y^{x_2,y}_{s_0}\right|\\
& \leq \|\nabla_y\nabla_x\hat{b}(x,y,v)\|_0 \cdot C(\|c\|_1,\beta)|x_1-x_2|\\
& \leq C(\mathcal{P})\left(\sqrt{s_0}+1\right)e^{-\frac{\beta s_0}{4}}|x_1-x_2|.
\end{aligned}
\end{equation}
For the term $\Upsilon_3$, by a similar argument as $\Upsilon_2$, we have
\begin{equation}
\label{Upsilon3}
\begin{aligned}
|\Upsilon_3| & \leq  \mathbb{E}\left[\|\nabla^{2}_y\hat{b}(x,y,v)\|_0 \cdot\left|Y^{x_1,y}_{s_0}-Y^{x_2,y}_{s_0}\right|\cdot |\nabla_{x}Y^{x_1,y}_{s_0}|\right]\\
&+\mathbb{E}\left[\|\nabla_x\nabla_y\hat{b}(x,y,v)\|_0 \cdot\left|x_1-x_2\right|\cdot |\nabla_{x}Y^{x_1,y}_{s_0}|\right]\\
&+\mathbb{E}\left[\|\nabla_y\hat{b}(x,y,v)\|_0 \cdot |\nabla_{x}Y^{x_1,y}_{s_0}-\nabla_{x}Y^{x_2,y}_{s_0}|\right]\\
&\leq C(\mathcal{P})\left(\sqrt{s_0}+1\right)e^{-\frac{\beta s_0}{4}}|x_1-x_2|.
\end{aligned}
\end{equation}
Combing \eqref{Upsilon1}-\eqref{Upsilon3}, we obtain
\begin{equation}
\begin{aligned}
\left|\nabla_x\widetilde{b}(s,s_0,x_1,y,v)-\nabla_x\widetilde{b}(s,s_0,x_2,y,v)\right|\leq C(\mathcal{P})\left(\sqrt{s_0}+1\right)e^{-\frac{\beta s_0}{4}}\cdot |x_1-x_2|\cdot (1+|y|)
\end{aligned}
\end{equation}
Thus
\begin{equation}
\begin{aligned}
\sup_{v\in U}\sup_{x\in \mathbb{R}^n}\left\|\nabla^2_x\Phi(x,y,v)\right\|&= \sup_{v\in U}\sup_{x\in \mathbb{R}^n}\left\|\int^{\infty}_{0}\nabla^2_x\widetilde{{b}}(s,\infty,x,y,v)ds\right\|\\
&=\sup_{v\in U}\sup_{x\in \mathbb{R}^n}\left\|\lim_{s_0\rightarrow \infty}\int^{\infty}_{0}\nabla^2_x\widetilde{{b}}(s,s_0,x,y,v)ds\right\|\\
& \leq C(\mathcal{P})(1+|y|).
\end{aligned}
\end{equation}
(v) Recall that
\begin{equation}
\begin{aligned}
\sup_{x\in \mathbb{R}^n}\sup_{y\in \mathbb{R}^m}\left\|\nabla_v\Phi(x,y,v)\right\|
&= \sup_{x\in \mathbb{R}^n}\sup_{y\in \mathbb{R}^m}\int^{\infty}_0 \int_{\mathbb{R}^m}\mathbb{E}\left[ \nabla_v b(x,Y^{x,y}_s,v)-\nabla_v b(x,Y^{x,y}_{\infty},v)\right]\mu_{x}(dy)ds \\
& \leq \sup_{v\in U}\sup_{x\in \mathbb{R}^n}\int^{\infty}_0 \|\nabla_y \nabla_v b\|_{0}e^{-\frac{\beta s}{2}} ds \\
& \leq C(\|\nabla_y \nabla_v b\|_{0}, \beta).
\end{aligned}
\end{equation}
\end{proof}
\subsection{Proof of Theorem \ref{intm}}
Now, we are in the position to give proof of Theorem \ref{intm} under assumptions $(\bf{A^{'}_{b}})$, $(\bf{A_{c}})$, $(\bf{A^{'}_{L}})$, $(\bf{A^{'}_{g}})$ and ($\bf{A_{b,L}}$), i.e., we show that the value function $u^{\varepsilon}$ converges to the effective value function $u$. We divide the proof into the following two steps. \\
{ \bf{Step 1.}} Note that the difference between $X^{\varepsilon}_s$ and $\bar{X}^{\varepsilon}_s$ can be expressed as:
\begin{equation}
\begin{aligned}
X^{\varepsilon}_s-\bar{X}^{\varepsilon}_s&=\int^{s}_{t}\left[b(X^{\varepsilon}_r,Y^{\varepsilon}_r, v_r)-\Bar{b}(\Bar{X}^{\varepsilon}_{r},
v_{r})\right]dr \\
&=\int^{s}_{t}\left[b(X^{\varepsilon}_r,Y^{\varepsilon}_r, v_r)-\Bar{b}({X}^{\varepsilon}_{r},
v_{r})\right]dr+\int^{s}_{t}\left[\bar{b}(X^{\varepsilon}_r, v_r)-\Bar{b}(\Bar{X}^{\varepsilon}_{r},
v_{r})\right]dr.
\end{aligned}
\end{equation}
By virtue of the Lipschitz continuity of $\Bar{b}$, we can
\begin{equation}
\begin{aligned}
\sup_{v \in U}\mathbb{E}\left(\sup_{s \in[t,T]}|X^{\varepsilon}_s-\Bar{X}^{\varepsilon}_s|^{p}\right) &\leq C(p)\sup_{v \in U}\mathbb{E}\left[\sup_{s \in[t,T]}\int^{s}_{t}\left|b(X^{\varepsilon}_r,Y^{\varepsilon}_r, v_r)-\Bar{b}({X}^{\varepsilon}_{r},
v_{r})\right|dr\right]\\&+C(p,T)\sup_{v \in U}\mathbb{E}\left(\sup_{s \in[t,T]}|X^{\varepsilon}_s-\Bar{X}^{\varepsilon}_s|^{p}\right).
\end{aligned}
\end{equation}
Gr\"onwall's inequality implies

\begin{equation}
\label{329}
\sup_{v \in U}\mathbb{E}\left(\sup_{s \in[t,T]}|X^{\varepsilon}_s-\Bar{X}^{\varepsilon}_s|^{p}\right) \leq C(p,T)\sup_{v \in U}\mathbb{E}\left[\sup_{s \in[t,T]}\int^{s}_{t}\left|b(X^{\varepsilon}_r,Y^{\varepsilon}_r, v_r)-\Bar{b}({X}^{\varepsilon}_{r},
v_{r})\right|dr\right].
\end{equation}

By It\^o's formula for the function $\Phi(t, x,y,v)$, we have
\begin{equation}
\begin{aligned}
\Phi(X^{\varepsilon}_s,Y^{\varepsilon}_s, v_s)&=\Phi(x,y,v_t)+\int^{s}_t \frac{\partial \Phi}{\partial v} \frac{dv}{dr}dr
+\int^{s}_t\mathcal{L}_1(X^{\varepsilon}_{r},Y^{\varepsilon}_{r})\Phi( X^{\varepsilon}_r,Y^{\varepsilon}_r,v_r)dr\\
&+\frac{1}{\varepsilon}\int^{s}_t\mathcal{L}_2(X^{\varepsilon}_{r},Y^{\varepsilon}_{r})\Phi(X^{\varepsilon}_r,Y^{\varepsilon}_r, v_r)dr+M^{\varepsilon,1}_s+M^{\varepsilon,2}_s,
\end{aligned}
\end{equation}
where $M^{\varepsilon,1}_s$, $M^{\varepsilon,2}_s$ are two $\mathcal{F}_s$-martingales defined by
\begin{equation}
\begin{aligned}
M^{\varepsilon,1}_s&:=\int^{s}_t\int_{\mathbb{R}^n}\Phi(X^{\varepsilon}_{r-}+x,Y^{\varepsilon}_{r-},v_{r-})
-\Phi(X^{\varepsilon}_{r-},Y^{\varepsilon}_{r-},v_{r-})\widetilde{N}^{1}(dr,dx), \\
M^{\varepsilon,2}_s&:=\int^{s}_t\int_{\mathbb{R}^m}\Phi(X^{\varepsilon}_{r-},Y^{\varepsilon}_{r-}+y,v_{r-})
-\Phi(X^{\varepsilon}_{r-},Y^{\varepsilon}_{r-},v_{r-})\widetilde{N}^{2}(dr,dy).
\end{aligned}
\end{equation}
Consequently, we have
\begin{equation}
\begin{aligned}
\int^{s}_{t}-\mathcal{L}_2(X^{\varepsilon}_{r},Y^{\varepsilon}_{r})\Phi(X^{\varepsilon}_{r-},Y^{\varepsilon}_{r-},v_{r-})dr&=\varepsilon \left[\Phi( x,y,v_t)-\Phi(X^{\varepsilon}_{s},Y^{\varepsilon}_{s},v_s)\right]\\
&+\varepsilon \left[\int^{s}_t \frac{\partial \Phi}{\partial v} dv_r+\int^{s}_{t}\mathcal{L}_1(X^{\varepsilon}_{r},Y^{\varepsilon}_{r})\Phi(X^{\varepsilon}_{r},
Y^{\varepsilon}_{r},v_r)dr+M^{\varepsilon,1}_s+M^{\varepsilon,2}_s\right],
\end{aligned}
\end{equation}
and
\begin{equation}
\begin{aligned}
\sup_{v \in U}\mathbb{E}\left(\sup_{s \in[t,T]}|X^{\varepsilon}_s-\Bar{X}^{\varepsilon}_s|^{p}\right)&\leq C(p,T)\sup_{v \in U}\mathbb{E}\left[\sup_{s \in[t,T]}\int^{s}_{t}\left[b(X^{\varepsilon}_r,Y^{\varepsilon}_r, v_r)-\Bar{b}({X}^{\varepsilon}_{r},
v_{r})\right]dr\right]\\
&=
C(p,T)\sup_{v \in U} \mathbb{E}\left[\sup_{s\in[t,T]}\left|\int^{s}_{t}-\mathcal{L}_2(X^{\varepsilon}_{r},Y^{\varepsilon}_{r})
\Phi(X^{\varepsilon}_{r},Y^{\varepsilon}_{r},v_r)dr\right|^{p}\right]\\
&\leq C(p,T)\varepsilon^{p}\sup_{v \in U}\left[\mathbb{E}\sup_{s \in[t,T]}|\Phi(x,y,v_t)-\Phi(X^{\varepsilon}_{s},Y^{\varepsilon}_{s},v_s)|^{p}\right]\\
&+C(p,T)\varepsilon^{p} \sup_{v \in U}\mathbb{E}\int^{T}_{t}\left|\frac{\partial \Phi}{\partial v}\frac{\partial v}{\partial r}\right| dr\\
&+C(p,T)\varepsilon^{p} \sup_{v \in U}\mathbb{E}\int^{T}_{t}\left|\mathcal{L}_1(X^{\varepsilon}_{r},Y^{\varepsilon}_{r},v_r)\Phi(X^{\varepsilon}_{r-},Y^{\varepsilon}_{r-},v_r)\right|^pdr\\
&+C(p,T)\varepsilon^{p}\sup_{v \in U}\mathbb{E}\left(\sup_{s\in[t,T]}|M^{\varepsilon,1}_s|^{p}\right)
\\&+C(p,T)\varepsilon^{p}\sup_{v \in U}\mathbb{E}\left(\sup_{s\in[t,T]}|M^{\varepsilon,2}_s|^{p}\right)\\
&:=C(p,T)\varepsilon^{p}\left(H_1+H_2+H_3+H_4+H_5\right).
\end{aligned}
\end{equation}
For the term $H_1$, by the inequality \eqref{i} and Lemma \ref{corollary1}, we have
\begin{equation}
\begin{aligned}
\label{H1}
H_1 &\leq C(\|\nabla_y b\|_{0}, \beta)\left[\mathbb{E}\sup_{s \in[t,T]}\left(1+|y|+|Y^{\varepsilon}_{s}|\right)^{p}\right]\\
&\leq C(\mathcal{P})(1+|y|^{p})\varepsilon^{-p/\alpha_2}.
\end{aligned}
\end{equation}
For the term $H_2$, by the inequality \eqref{gradient}, we have
\begin{equation}
\begin{aligned}
H_2&\leq C_{T} \sup_{v \in U}\mathbb{E}\int^{T}_{t}\left|\frac{\partial \Phi}{\partial v}\right| dr \leq C(\mathcal{P}).
\end{aligned}
\end{equation}
For the term $H_3$, by Lemma \ref{corollary1}, inequalities \eqref{iii} and \eqref{iv},  we have
\begin{equation}
\label{H2}
\begin{aligned}
H_3&=C(p)\sup_{v \in U}\mathbb{E}\left[\int^{T}_{t}\left|\left\{\int_{\mathbb{R}^n}\left[\Phi(X^{\varepsilon}_{r-}+z, Y^{\varepsilon}_{r-}, v_{r-})-\Phi(X^{\varepsilon}_{r-}, Y^{\varepsilon}_{r-}, v_{r-})-\right.\right.\right.\right.\\
&\quad \quad \left.\left.\left.\left.I_{\{|z|\leq 1\}}\left\langle z, \nabla_{x} \Phi(X^{\varepsilon}_{r-}, Y^{\varepsilon}_{r-}, v_{r-})\right\rangle \right]v_{1}(dz)\right\}\right|^{p}dr\right]\\
&+C(p)\sup_{v \in U}\mathbb{E}\left[\int^{T}_{t}\left|\langle b(X^{\varepsilon}_{r},Y^{\varepsilon}_{r}, v^{\varepsilon}_{r}), \nabla_{x} \Phi(X^{\varepsilon}_{r}, Y^{\varepsilon}_{r}, v^{\varepsilon}_{r}) \rangle \right|^p\right]dr\\
& \leq C(p)\sup_{v \in U} \mathbb{E}\int^T_{t}\left(\|\nabla_{x} \Phi \|_{0}\int_{|z|>1}z\nu_1(dz)+\|\nabla^2_{x} \Phi \|_0\int_{|z| \leq 1}z^2\nu_1(dz)\right)^{p}ds\\
&+C(p, K_2)\sup_{v \in U} \mathbb{E}\int^T_{t}(1+|X^{\varepsilon}_{r}|^p)\left(1+|Y^{\varepsilon}_{r}|^{p/2}\right)dr\\
& \leq C(p)\sup_{v \in U} \mathbb{E}\int^T_{t}\left(\|\nabla_{x} \Phi \|_{0}\int_{|z|>1}z\nu_1(dz)+\|\nabla^2_{x} \Phi \|_0\int_{|z| \leq 1}z^2\nu_2(dz)\right)^pds\\
&+C(p, K_2)\sup_{v \in U} \mathbb{E}\int^T_{t}\left(1+|X^{\varepsilon}_{r}|^{q_1}+|Y^{\varepsilon}_{r}|^{q_2}\right)dr\\
& \leq C(p,T)\left(1+|x|^{q_1}+|y|^{q_2}\right),
\end{aligned}
\end{equation}
where $p<q_1<\alpha_1$ and  $p<q_2<\alpha_2$.

For the term $H_4$, by applying the Burkholder-Davis-Gundy inequality, along with inequality \eqref{iii} and Lemma \ref{corollary1}, we obtain
\begin{equation}
\begin{aligned}
H_4 &\leq C(p)\sup_{v\in U}\mathbb{E}\left[\sup_{s \in[t,T]}\left|\int^{s}_t\int_{|x|\leq 1}\Phi(t,X^{\varepsilon}_{r-}+x,Y^{\varepsilon}_{r-},v_{r-})
-\Phi(t,X^{\varepsilon}_{r-},Y^{\varepsilon}_{r-},v_{r-})\widetilde{N}^{1}(dr,dx)\right|^{p}\right]\\
&+C(p)\sup_{v\in U}\mathbb{E}\left[\sup_{s \in[t,T]}\left|\int^{s}_t\int_{|x|>1}\Phi(t,X^{\varepsilon}_{r-}+x,Y^{\varepsilon}_{r-},v_{r-})
-\Phi(t,X^{\varepsilon}_{r-},Y^{\varepsilon}_{r-},v_{r-})\widetilde{N}^{1}(dr,dx)\right|^{p}\right]\\
& \leq C(p)\sup_{v\in U}\mathbb{E}\left|\int^{T}_t\int_{|x|\leq 1}\left|\Phi(t,X^{\varepsilon}_{r-}+x,Y^{\varepsilon}_{r-},v_{r-})
-\Phi(t,X^{\varepsilon}_{r-},Y^{\varepsilon}_{r-},v_{r-})\right|^{2}{N}^{1}(dr,dx)\right|^{\frac{p}{2}}\\
&+C(p)\sup_{v\in U}\mathbb{E}\left[\int^{T}_t\int_{|x|>1}\left|\Phi(t,X^{\varepsilon}_{r-}+x,Y^{\varepsilon}_{r-},v_{r-})
-\Phi(t,X^{\varepsilon}_{r-},Y^{\varepsilon}_{r-},v_{r-})\right|^{p}\nu_1(dx)ds\right]\\
& \leq  C(p,T) \sup_{v\in U}\mathbb{E}\int^{T}_{t}\left(\left(\int_{|x|\leq 1}|x|^2\nu_1(dx)\right)^{\frac{p}{2}}+\left(\int_{|x|> 1}|x|^p\nu_1(dx)\right)\right)\left(1+|Y^{\varepsilon}_{r}|^{p}\right)dr\\
& \leq C(p,T)(1+|y|^p)\varepsilon^{-p/\alpha_2}.
\end{aligned}
\end{equation}
Using the same argument as $H_4$, we have
\begin{equation}
\label{H34}
H_5 \leq C(p,T)\varepsilon ^{-\frac{p}{\alpha_2}}.
\end{equation}
Combining \eqref{H1}, \eqref{H2} and \eqref{H34}, we obtain
\begin{equation*}
\label{XX}
\sup_{v \in U}\mathbb{E}\left(\sup_{s\in[t,T]}|X^{\varepsilon}_s-\Bar{X}^{\varepsilon}_s|^{p}\right)\leq C(\mathcal{P}) \varepsilon^{p\left(1-{1}/{\alpha_2}\right)}.
\end{equation*}

{ \bf{Step 2.}} Let $v^{\varepsilon}$ be any admissible control, let $(X^{\varepsilon},Y^{\varepsilon})$ be the solution
of \eqref{slo0} corresponding to $v^{\varepsilon}$, and let $\Bar{X}^{\varepsilon}$ be the solution of \eqref{barX} corresponding to the same $v^{\varepsilon}$. By Lipschitz property of function $L$, Lemma \ref{intm} and the same argument as the right side of \eqref{329}, we have as $\varepsilon \rightarrow 0$

\begin{equation}
\label{N}
\begin{aligned}
& \sup_{v^{\varepsilon} \in U}\mathbb{E}\left[ \left|\int^{T}_{t}e^{-\lambda(s-t)}\left[L(X^{\varepsilon}_s,Y^{\varepsilon}_s,v^{\varepsilon}_s)-\bar{L}(\bar{X}^{\varepsilon}_s, v^{\varepsilon}_s)\right]ds \right|\right]\\
&\leq  \sup_{v^{\varepsilon} \in U}\mathbb{E}\left[\int^{T}_{t}\left|L(X^{\varepsilon}_s,Y^{\varepsilon}_s,v^{\varepsilon}_s)-{L}(\bar{X}^{\varepsilon}_s, Y^{\varepsilon}_s,v^{\varepsilon}_s)\right|ds\right]\\&+ \sup_{v^{\varepsilon} \in U}\mathbb{E}\left[\mathbb{E}\int^{T}_{t}\left|{L}(\bar{X}^{\varepsilon}_s, Y^{\varepsilon}_s,v^{\varepsilon}_s)-\bar{L}(\bar{X}^{\varepsilon}_s,v^{\varepsilon}_s)\right|ds\right] \\
& \leq C(\mathcal{P}) \varepsilon^{p\left(1-{1}/{\alpha_2}\right)}.
\end{aligned}
\end{equation}

Introduce the following Kolmogorov equation
\begin{equation}
\label{KE}
\left\{
\begin{aligned}
\partial_t \Upsilon(s,x)&=\bar{\mathcal{L}}_1 \Upsilon(s,x), ~~ s \in [t,T], \\
\Upsilon(t,x)&=g(x),\\
\end{aligned}
\right.
\end{equation}
where $g \in \cC^{3}_{b}$ and $\bar{\mathcal{L}}_1$ is the infinitesimal generator of the transition semigroup of the averaged equation \eqref{barX}, which is given by
\begin{equation}
\bar{\mathcal{L}}_1 \Upsilon(s,x):=-\left(-\Delta_{x}\right)^{\frac{\alpha_1}{2}}\Upsilon(s,x)+\bar{b}(x,v) \cdot \nabla_{x} \Upsilon(s,x).
\end{equation}
One can check by straightforward computation that $\bar{b}\in \cC^{3}_{b}$. Thus, equation \eqref{KE} has a unique solution $\Upsilon(s,x)=\mathbb{E}g(\bar{X}_s(x))$. We define $\Upsilon^{T+t}(s,x):=\Upsilon(T+t-s,x)$. If follows that  $\Upsilon^{T+t}(T,X^{\varepsilon}_T)=\mathbb{E}g(X^{\varepsilon}_T)$ and $\Upsilon^{T+t}(t,x)=\mathbb{E}g(\bar{X}_T(x))$.
By applying It\^o's formula to the terminal cost function $g$ and using an argument analogous to the right-hand side of \eqref{329} for the slow component $X^{\varepsilon}_{t}$ as in \cite[Theorem 2.3]{XS22}, we obtain that
\begin{align}
\label{CovG}
  \sup_{v^{\varepsilon} \in U}  \mathbb{E}\left|e^{\lambda(t-T)}{g}\left(X^{\varepsilon}_T\right)-e^{\lambda(t-T)}{g}\left(\Bar{X}_T\right)\right| \leq C(\mathcal{P})\varepsilon.
\end{align}


Combing \eqref{N} and \eqref{CovG}, we get
\begin{equation}
\label{side1}
|u^{\varepsilon}-u|\leq C(\mathcal{P})\left(\varepsilon^{p\left(1-{1}/{\alpha_2}\right)}\vee\varepsilon \right).
\end{equation}


\section{Concluding remarks.}\label{sec-5}

In this paper, we establish an averaging principle for a class of two-time-scale stochastic control systems driven by $\alpha$-stable noise. The associated singular perturbation problem for nonlocal Hamilton-Jacobi-Bellman (HJB) equations is also investigated. By averaging over the ergodic measure of the fast component, we construct an effective stochastic control problem along with the corresponding effective HJB equation for the original multiscale system. We employ two distinct methods--a PDE approach and a probabilistic approach--to prove the convergence of the value function. Although the probabilistic method is more natural, it requires stronger regularity assumptions on the drift term of the original stochastic differential equation (SDE), in contrast to the PDE method.

Unlike the classical Gaussian case, the sample paths of jump diffusions are discontinuous; they are right-continuous with left limits. Moreover, the associated HJB equations are nonlocal. To address these challenges, we extend methodologies originally developed for SDEs with Gaussian noise and second-order HJB equations to this jump-diffusion setting.

This work has several limitations. First, the condition $1 < \alpha_i < 2$ for $i = 1, 2$ plays a crucial role in deriving the effective dynamical system. The problem of obtaining an effective reduced-dimensional system and quantifying the influence of the fast components on the slow ones remains open for the case $\alpha_i \in (0, 1)$. Second, the current analysis is restricted to additive stable LšŠvy noises. Extending these results to systems driven by multiplicative $\alpha$-stable noises poses a significant challenge. Finally, establishing the rate of convergence under more general regularity conditions on the coefficients of the slow component remains an interesting open question.

\section*{Appendix A. Proof of Lemma \ref{LiouP}.}

\begin{proof}
Without loss of generality, we assume that $V \geq 0$. For every $\eta >0$, we introduce the function $V_{\eta}(y) = V(y)-\eta w(y)$, where $w(y) = \sqrt{1 +|y|^2}$ is a Lyapunov function introduced in lemma \ref{lyaf}.
We first claim that for some fix $R>R_{0}$ and every $\eta >0$, $V_{\eta}$ is a viscosity subsolution to
\begin{equation}
  \left\{
   \begin{aligned}
    -\mathcal{L}_{2}u = & 0, \quad \text{in } B_{R}^{c}(0),  \\
    u(y) = & V_{\eta}(y), \quad \text{on } B_{R}(0).
   \end{aligned}
   \right.
\end{equation}
We prove this claim by contradiction. By definition \ref{VSD1}, we assume that there exists a point $\Bar{y} \in B^{c}_{R}(0) $ and a test function $\Bar{\varphi} \in C^{2}(\mathbb{R}^m)$ so that $\Bar{y}$ is a maximum point of $V_{\eta} - \Bar{\varphi}$ in $B_{\delta}(\Bar{y})\subset B_{R}^{c}(0)$, $V_{\eta}(\Bar{y})=\Bar{\varphi}(\Bar{y})$ and
$-\mathcal{L}_{2}\Bar{\varphi}(\Bar{y})> 0$.
By the regularity of $\Bar{\varphi}$, $V_{\eta}$, and $c(x,y)$, for every $x\in \mathbb{R}^n$ there exists a small constant $0 <\Bar{\delta} < \delta/2$ so that $-\mathcal{L}_{2}\Bar{\varphi}(y) >0$ for every $y \in B_{\Bar{\delta}}(\Bar{y})$.
Moreover, for every $y\in B^{c}_{R}$, $-\mathcal{L}_{2}w(y)>0$. Thus $V_{\eta} - \Bar{\varphi}=V-(\eta w + \Bar{\varphi})$ is a viscosity subsolution to
\begin{equation}
  \left\{
   \begin{aligned}
    -\mathcal{L}_{2}u = & 0, \quad \text{in } B_{\Bar{\delta}}(\Bar{y}),  \\
    u(y) = & h(y), \quad \text{on } B^{c}_{\Bar{\delta}}(\Bar{y}),
   \end{aligned}
   \right.
\end{equation}
where $h(y):=V_{\eta}(y)-\Bar{\varphi}(y)$. Note that $\Bar{y}$ is a strict maximum point of $V_{\eta} - \Bar{\varphi}$ on $B_{\Bar{\delta}}(\Bar{y})$, and $V(\Bar{y})=(\eta w + \Bar{\varphi})(\Bar{y})$. Thus for every $y\in B^{c}_{\Bar{\delta}}(\Bar{y})$, we have
\begin{equation}
    h(y):=V_{\eta}(y)-\Bar{\varphi}(y) < V_{\eta}(\Bar{y})- \Bar{\varphi}(\Bar{y})=0.
\end{equation}
Then the maximum principle (Corollary \ref{Maxp}) for $V_{\eta}- \Bar{\varphi}$  implies that $V_{\eta}(\Bar{y}) < \Bar{\varphi}(\Bar{y}).$
However, it is a contradiction. Thus $V_{\eta}$ is a viscosity subsolution to (\ref{NonlocalL2}) in $B_{R}^{c}$.

Since $V_{\eta}(y) \to -\infty$ as $|y|\to \infty$ for every $\eta >0$, there exist a constant $M_{\eta} > R$ so that $V_{\eta}(y) \leq \sup_{|z|\leq R}V_{\eta}(z)$ for every $|y| \geq M_{\eta}$. Using the maximum principle for $V_{\eta}$ in domain $\{R \leq |y| \leq M_{\eta}\}\subset B_{R}^{c}$, we obtain that
\begin{equation}
    V_{\eta}(y) \leq \sup_{|z|\leq R} V_{\eta}(z),\quad \forall \eta >0,\quad \forall |y|\geq R.
\end{equation}
Letting $\eta \to 0$, it follows that
\begin{equation}
    V(y) \leq \sup_{|z|\leq R} V(z),\quad \forall \eta >0,\quad \forall |y|\geq R.
\end{equation}
Thus $V(y)$ attains its global maximum at some interior point of $B_{R}(0)$. Using the maximum principle to (\ref{NonlocalL2}), $V(y)$ is a constant.

\end{proof}

\section*{Appendix B. A heuristic derivation based on the dynamic programming principle.}

\begin{proof}
In this section, we present a heuristic derivation based on the dynamic programming principle. By the definition of $u^{\varepsilon}$, we have
\begin{equation}
\begin{aligned}
    u^{\varepsilon}(t,X_{t}^{\varepsilon, t,x},Y_{t}^{\varepsilon, t,y})&:=
    u^{\varepsilon}(t,x,y)
    = \sup\limits_{v\in\mathcal{U}} \; J^{\varepsilon}(v,x,y,t)\\
    &:= \sup\limits_{v\in\mathcal{U}} \;\mathbb{E}_{(x,y)}\left[
    e^{\lambda(t-T)}g(X_{T}^{\varepsilon, t,x},Y_{T}^{\varepsilon, t,y})-
    \int_{t}^{T}L(X^{\varepsilon, t, x}_{s},Y_{s}^{\varepsilon, t,y},v_{s})e^{-\lambda(s-t)}\dd s
    \right].
\end{aligned}
\end{equation}
Now, consider the expectation term:
\begin{equation}
\begin{aligned}
    \quad \mathbb{E}_{(x,y)} & \left[
    e^{-\lambda(T-t+h-h)}g(X_{T}^{\varepsilon, t,x},Y_{T}^{\varepsilon, t,y})- \int_{t}^{t+h}L(X_{s}^{\varepsilon, t,x},Y_{s}^{\varepsilon, t,y},v_{s})e^{-\lambda(s-t)}\dd s  \right.\\
    & \quad \quad \quad \quad \quad \quad -\left.
    \int_{t+h}^{T}L(X_{s}^{\varepsilon, t,x},Y_{s}^{\varepsilon, t,y},v_{s})e^{-\lambda(s-t)}\dd s \;
    \right]\\\label{integral h}
    & = \mathbb{E}_{(x,y)}\left\{ e^{-\lambda[T-(t+h)]}\, e^{-\lambda\,h}g(X_{T}^{\varepsilon, t,x},Y_{T}^{\varepsilon, t,y})- h\, L(X_{t}^{\varepsilon, t,x},Y_{t}^{\varepsilon, t,y},v_{t})e^{-\lambda(t-t)} + o(h)   \right.\\
    & \quad \quad \quad \quad \quad \quad -e^{-\lambda\,h}\left.
    \int_{t+h}^{T}L(X_{s}^{\varepsilon, t,x},Y_{s}^{\varepsilon,t,y},v_{s})e^{-\lambda(s-(t+h)})\dd s \;
    \right\}\\
    & = \mathbb{E}_{(x,y)}\left\{ -h\,L(X_{t}^{\varepsilon,t,x},Y_{t}^{\varepsilon,t,y},v_{t}) + o(h)  +\right.\\
    & \quad \quad \quad \quad \left.+ e^{-\lambda\,h}\left[ e^{-\lambda(T-(t+h))}\,g(X_{T}^{\varepsilon, t,x},Y_{T}^{\varepsilon, t,y})- \int_{t+h}^{T}L(X_{s}^{\varepsilon, t,x},Y_{s}^{\varepsilon, t,y},v_{s})e^{-\lambda(s-(t+h)})\dd s\right]\right\}\\
    & = \mathbb{E}_{(x,y)}\bigg\{-h\,L(x,y,v_t) + o(h) + \\
    & \left. + e^{-\lambda\,h}\mathbb{E}_{(X_{t+h}^{\varepsilon, t,x}, Y_{t+h}^{\varepsilon, t,y})}\left[g(X_{T}^{t+h,X_{t+h}^{\varepsilon, t,x}},Y_{T}^{t+h,Y_{t+h}^{\varepsilon, t,y}})- \int_{t+h}^{T}L(X_{s}^{\varepsilon,t+h,X_{t+h}^{\varepsilon, t,x}},Y_{s}^{\varepsilon, t+h,Y_{t+h}^{\varepsilon, t,y}},v_{s})e^{-\lambda(s-(t+h))}\dd s\right] \right\},
\end{aligned}
\end{equation}
where the last step uses the tower property of conditional expectation:
\begin{equation*}
\mathbb{E}\left[\cdot|\sigma(X_{t}^{\varepsilon, t,x},Y_{t}^{\varepsilon, t,y})\right]=\mathbb{E}\left[\mathbb{E}\left[\cdot|\sigma(X_{t+h}^{\varepsilon, t,x},Y_{t+h}^{\varepsilon, t,y})\right]|\sigma(X_{t}^{\varepsilon, t,x},Y_{t}^{\varepsilon, t,y})\right].
\end{equation*}
Thus we obtain
\begin{equation}
 u^{\varepsilon}(t,X_{t}^{\varepsilon, t,x},Y_{t}^{\varepsilon, t,y})=\sup_{v \in U}\mathbb{E}_{(x,y)}\left[-h\,L(x,y,v) + o(h)+e^{-\lambda h} u^{\varepsilon}(t+h,X_{t+h}^{\varepsilon, t,x},Y_{t+h}^{\varepsilon, t,y})\right].
\end{equation}
This implies
\begin{equation}
\begin{aligned}
&\sup\limits_{v\in U} \; \mathbb{E}_{(x,y)}\bigg[ -h\,L(x,y,v) + o(h) + e^{-\lambda\,h}\, u^{\varepsilon}(t+h,X_{t+h}^{\varepsilon, t,x}, Y_{t+h}^{\varepsilon, t,y}) - e^{-\lambda \cdot 0}u^{\varepsilon}(t,X_{t}^{\varepsilon, t,x},Y_{t}^{\varepsilon, t,y})\bigg]\\
     &=  \sup\limits_{v\in U}\left\{-h\cdot L(x,y,v) + o(h) + \mathbb{E}_{(x,y)}\bigg[e^{-\lambda\,h}\, u^{\varepsilon}(t+h,X_{t+h}^{\varepsilon, t,x}, Y_{t+h}^{\varepsilon, t,y}) -e^{-\lambda \cdot 0}u^{\varepsilon}(t,X_{t}^{\varepsilon, t,x},Y_{t}^{\varepsilon, t,y})\bigg]\right\}\\
     &= \sup\limits_{v\in U}h\cdot\left\{-L(x,y,v) + o(1)+ \frac{1}{h}\mathbb{E}_{(x,y)}\bigg[e^{-\lambda\,h}\, u^{\varepsilon}(t+h,X_{t+h}^{\varepsilon, t,x}, Y_{t+h}^{\varepsilon, t,y}) - e^{-\lambda \cdot 0}u^{\varepsilon}(t,X_{t}^{\varepsilon, t,x},Y_{t}^{\varepsilon,t,y})\bigg]\right\}\\
     &=0.
\end{aligned}
\end{equation}
Note that
\begin{align}
    \lim\limits_{h\to 0}\, & \frac{1}{h}\mathbb{E}_{(x,y)}\bigg[e^{-\lambda\,h}\, u^{\varepsilon}(t+h,X_{t+h}^{\varepsilon,t,x}, Y_{t+h}^{\varepsilon, t,y}) - e^{-\lambda \cdot 0}u^{\varepsilon}(t,X_{t}^{\varepsilon,t,x},Y_{t}^{\varepsilon, t,y})\bigg]\\
    & = \mathbb{E}_{(x,y)}\bigg[-\lambda \, u^{\varepsilon}(t,X_{t}^{\varepsilon, t,x},Y_{t}^{\varepsilon, t,y}) + \partial_{t}u^{\varepsilon}(t,X_{t}^{\varepsilon,t,x},Y_{t}^{\varepsilon, t,y}) + \mathcal{L}^{\varepsilon}u^{\varepsilon}(t,X_{t}^{\varepsilon,t,x},Y_{t}^{\varepsilon,t,y})\bigg]\\
    & = -\lambda \, u^{\varepsilon}(t,x,y) + \partial_{t}u^{\varepsilon}(t,x,y) + \mathcal{L}^{\varepsilon}u^{\varepsilon}(t,x,y)
\end{align}
where $\mathcal{L}^{\varepsilon}$ is the generator of the process $({X_{s}^{\varepsilon}},{Y_{s}^{\varepsilon}})$.

Hence, we formally derive the nonlocal HJB equation:
\begin{align}\label{almost hjb}
    0 = \sup\limits_{v\in U}\left[-L(x,y,v)- \lambda \, u^{\varepsilon}(t,x,y) + \partial_{t}u^{\varepsilon}(t,x,y) + \mathcal{L}^{\varepsilon}u^{\varepsilon}(t,x,y) \right]
\end{align}
\end{proof}

\section*{Acknowledgments}
\medskip
\textbf{Acknowledgements} The work of Q. Zhang is supported by the China Postdoctoral Science Foundation (Grant No. 2023M740331).
The research of Y. Zhang is supported by the Natural Science Foundation of Henan Province of China (Grant No. 232300420110).

\end{document}